\documentclass[a4paper, 12pt]{article}
%%%%setlength%%%%%%%%%%%%%%%%%%%%%%%
\setlength{\oddsidemargin}{2mm}\setlength{\evensidemargin}{2mm}
\setlength{\topmargin}{-8mm}\setlength{\textwidth}{156mm}
\setlength{\textheight}{230mm}
%'±'±'Ü'Å%%%%%%%%%%%%%%%%%
%\documentclass[frame,tombow,center]{aspm}\def\TOUKOU{}
%\documentclass[]{aspm}\def\TOUKOU{}
%% "ŠeŽž'̓Rƒƒ"ƒg'ð"ü'ê'Ö'¦'é.
%% ˆê''͈ê"ԏォ'çu'±'±'܂Łv'Ü'Å
%% 'à'¤ˆê'Â'Í conm-p-l '̈ês'¾'¯
\newcommand{\toukou}[1]{\ifx\TOUKOU\undefined\else{#1}\fi}%
%%\toukou{"ŠeŽž'Ì'Ý'É'Å'é}
\newcommand{\toukoudel}[1]{\ifx\TOUKOU\undefined{#1}\else\fi}%
%%\toukoudel{"ñ"ŠeŽž'Ì'Ý'É'Å'é}
\newcommand{\toukouchange}[2]{\ifx\TOUKOU\undefined{#1}\else{#2}\fi}%
%%\toukouchange{"ñ"ŠeŽž'É'Å'é}{"ŠeŽž'ɏo'é}
%%%%%%%%%%%%%%
\usepackage{amssymb}
\usepackage{amsmath}
\usepackage{amsthm}
\usepackage[dvipdfmx]{graphicx}
\usepackage{verbatim,enumerate}
\usepackage[active]{srcltx}
 \newtheorem{theorem}{Theorem}[section]
 \newtheorem*{theorem*}{Theorem}
 \newtheorem*{lemma*}{Lemma}
 \newtheorem{proposition}[theorem]{Proposition}
 
 \newtheorem{fact*}{Fact}
 \newtheorem{lemma}[theorem]{Lemma}
 
\theoremstyle{definition}
 \newtheorem{definition}[theorem]{Definition}
 \newtheorem{remark}[theorem]{Remark}
 \newtheorem*{remark*}{Remark}
 \newtheorem*{acknowledgements}{Acknowledgements}
 
\numberwithin{equation}{section}
%\numberwithin{figure}{section}

%%%%%%%%%%%%%%%%%%%%%%%%%%%%%%%%%%%%%%%%%%%%%%%%%%%%%%%%%%%%%%
%\usepackage[usenames]{color}
%\newcommand{\red}[1]{\textcolor{Red}{#1}}
%\newcommand{\blue}[1]{\textcolor{Blue}{#1}}
%\newcommand{\green}[1]{\textcolor{Green}{#1}}
%\newcommand{\black}[1]{\textcolor{Black}{#1}}
%\newcommand{\authornote}[1]{\ignorespaces\marginpar{%
%               \tiny #1}\ignorespaces}
%\usepackage{showkeys}
%%%%%%%%%%%%%%%%%%%%%%%%%%%%%%%%%%%%%%%%%%%%%%%%%%%%%%%%%%%

\newcommand{\R}{\boldsymbol{R}}

\newcommand{\hess}{\operatorname{Hess}}

\renewcommand{\phi}{\varphi}

\newcommand{\inner}[2]{\left\langle{#1},{#2}\right\rangle}

\newcommand{\A}{\mathcal{A}}

\newcommand{\F}{\mathcal{F}}
\newcommand{\M}{\mathcal{M}}
\newcommand{\C}{\mathcal{C}}
\newcommand{\N}{\mathcal{N}}

\newcommand{\pmt}[1]{{\begin{pmatrix} #1  \end{pmatrix}}}

\newcommand{\mycomment}[1]{}

\usepackage{array}
\newcolumntype{L}{>{\displaystyle}l}
\newcolumntype{C}{>{\displaystyle}c}
\newcolumntype{R}{>{\displaystyle}r}
\toukouchange{%ˆÈ‰º'Í"ñ"ŠeŽž'É'Å'é
\makeatletter
  \newcommand{\subsubsubsection}{\@startsection{paragraph}{4}{\z@}%
    {-1ex \@plus -1ex \@minus -.2ex}%
    {1.0ex \@plus.2ex}%%
    {\reset@font\bfseries\normalsize}
  }
  \makeatother
  \setcounter{secnumdepth}{4}
}%"ñ"ŠeŽž'ɂł邱'±'Ü'Å
{%ˆÈ‰º'Í"ŠeŽž'É'Å'é
%%%%%%%%%%%%%%%%%%%%%%%%%%%%%%%%%
% \articleinfo{Book Title}{Vol}{Year}
%%%%%%%%%%%%%%%%%%%%%%%%%%%%%%%%%

\articleinfo{}{}{}

%%%%%%%%%%%%%%%%%%%%%%%%%%%%%%%%%
% \setcounter{page}{first page number}
%%%%%%%%%%%%%%%%%%%%%%%%%%%%%%%%%

\setcounter{page}{1}
\usepackage[mathscr]{eucal}

%%%%%%%%%%%%%%%%%%%%%%%%%%%%%%%%%
% \usepackage{  }
% \newcommand{  }
% \def{  }
% \newtheorem{  }
%%%%%%%%%%%%%%%%%%%%%%%%%%%%%%%%%
%%%%%%%%%%%%%%%%%%%%%%%%%%%%%%%%%
% \title[Running Head]{Article Title}
%%%%%%%%%%%%%%%%%%%%%%%%%%%%%%%%%
\title[geometric foliations on a cuspidal edge]{On pairs of geometric foliations on a cuspidal edge}
%%%%%%%%%%%%%%%%%%%%%%%%%%%%%%%%%
% \author[Running Head]{Full Name}
%%%%%%%%%%%%%%%%%%%%%%%%%%%%%%%%%
\author[Kentaro Saji]{Kentaro Saji}
%%%%%%%%%%%%%%%%%%%%%%%%%%%%%%%%%
% \address{Address}
%%%%%%%%%%%%%%%%%%%%%%%%%%%%%%%%%
\address{Department of Mathematics, Graduate School of Science, 
Kobe University,\\ Rokko, Nada, Kobe 657-8501, Japan
}
%%%%%%%%%%%%%%%%%%%%%%%%%%%%%%%%%
% \email{E-mail Address}
%%%%%%%%%%%%%%%%%%%%%%%%%%%%%%%%%
\email{sajiO\!\!\!amath.kobe-u.ac.jp}
%%%%%%%%%%%%%%%%%%%%%%%%%%%%%%%%%
% \rcvdate{Received Date}
% \rvsdate{Revised Date}
%%%%%%%%%%%%%%%%%%%%%%%%%%%%%%%%%
\rcvdate{}
\rvsdate{}
%%%%%%%%%%%%%%%%%%%%%%%%%%%%%%%%%
% \subjclass[2010]{AMS MSC}
%%%%%%%%%%%%%%%%%%%%%%%%%%%%%%%%%
%%example%%%%%%%%%%%%%%%%%%%%%%%%%%
%\subjclass[2010]{14D20, 33B30, 44A35}
%%%%%%%%%%%%%%%%%%%%%%%%%%%%%%%%%
\subjclass[2010]{53A05, 58F14}
%%%%%%%%%%%%%%%%%%%%%%%%%%%%%%%%%
% \keywords{***, ****}
%%%%%%%%%%%%%%%%%%%%%%%%%%%%%%%%%
\keywords{Cuspidal edge, principal
configuration, lines of curvature}
}%"ŠeŽžI—¹
%%%%%%%%%%%%%%%%%%%%%%%%%%%%%%%%%
\begin{document}
\toukouchange{%ˆÈ‰º'Í"ñ"ŠeŽž'É'Å'é
\begin{center}
{\large {\bf On pairs of geometric foliations on a cuspidal edge}}
\\[2mm]
\today
\\[2mm]
Kentaro Saji
%\footnote[0]{
%Partly supported by the Grant-in-Aid for Young Scientists (B),
%The Ministry of Education, Culture, Sports, Science and
%Technology, Japan. }

%\tableofcontents
\begin{quote}
{\small 
We study the  topological configurations of the lines of
principal curvature, the asymptotic and characteristic curves on a cuspidal
edge, in the domain of a parametrization of this surface as well as
on the surface itself. Such configurations
are determined by the $3$-jets of a parametrization of the surface.
}
\end{quote}
\end{center}
}{
\begin{abstract}
We study the  topological configurations of the lines of
principal curvature, the asymptotic and characteristic curves on a cuspidal
edge, in the domain of a parametrization of this surface as well as
on the surface itself. Such configurations
are determined by the $3$-jets of a parametrization of the surface.
\end{abstract}
\maketitle
}
\renewcommand{\thefootnote}{\fnsymbol{footnote}}
\footnote[0]{Partly supported by the
Japan Society for the Promotion of Science (JSPS)
and the
Coordenadoria de Aperfei\c{c}oamento de Pessoal de N\'ivel Superior
under the Japan-Brazil research cooperative
program and the
Grant-in-Aid for
Scientific Research (Young Scientists (B)) No. 26400087, from JSPS.}
\\[4mm]
\footnote[0]{ 2010 Mathematics Subject classification. 
Primary 53A05; Secondary 58K05, 57R45.}
\footnote[0]{Keywords and Phrases. Cuspidal edge, principal
configuration, lines of curvature}
\section{Introduction and preliminaries about cuspidal edges}
A singular point $x$ of a map $f:(\R^2,x)\to(\R^3,0)$ 
is called a {\it cuspidal edge\/}
if the map-germ $f$ at $x$ is $\mathcal{A}$-equivalent to
$(u,v)\mapsto(u,v^2,v^3)$ at $0$. (Two map-germs
$f_1,f_2:(\R^n,0)\to(\R^m,0)$ are $\mathcal{A}$-{\it
equivalent}\/ if there exist diffeomorphisms
$S:(\R^n,0)\to(\R^n,0)$ and $T:(\R^m,0)\to(\R^m,0)$ such
that $ f_2\circ S=T\circ f_1 $.) 
If the singular point $x$
of $f$ is a cuspidal edge, then $f$ at $x$ is a front
in the sense of \cite{AGV} (see also \cite{krsuy}), and
furthermore, they are one of two types of generic singularities of
fronts (the other one is a {\it swallowtail}\/ which is a singular
point $u$ of $f$ satisfying that $f$ at $u$ is
$\mathcal{A}$-equivalent to $(u,v)\mapsto(u, u^2v + 3u^4, 2 uv + 4
u^3)$ at $0$). 

It is shown in \cite{MS} that a cuspidal edge can locally be
parametrized after smooth changes of coordinates in the source and
isometries in the target by
\begin{equation}\label{eq:west1}
f(u,v)
=\left(u,a_1(u)+\dfrac{v^2}{2},b_2(u)+v^2b_3(u)+v^3b_4(u,v)\right),
\end{equation}
where 
$a_1(u),b_2(u),b_3(u),b_4(u,v)$ are $C^\infty$-functions
satisfying
$a_1(0)=a_1'(0)=b_2(0)=b_2'(0)=b_3(0)=0$, 
$b_2''(0)>0$, $b_4(0,0)\ne0$.
Writing 
$a_1(u)=a_{20}u^2/2+a_{30}u^3/6+u^4h_1(u)$,
$b_2(u)=b_{20}u^2/2+b_{30}u^3/6+u^4h_2(u)$,
$b_3(u)=b_{12}u/2+u^2h_3(u)$,
$b_4(u,v)=b_{03}/6+uh_4(u)+vh_5(u,v)$,
we have
\begin{equation}
\label{eq:west2}
\begin{array}{L}
f(u,v) 
= \displaystyle \Big(u,
\frac{a_{20}}{2}u^2+\frac{a_{30}}{6}u^3+\frac{v^2}{2},\\
\hspace{20mm}
\frac{b_{20}}{2}u^2+\frac{b_{30}}{6}u^3+\frac{b_{12}}{2}uv^2
+\frac{b_{03}}{6}v^3\Big) + h(u,v),
\end{array}
\end{equation}
where $b_{03}\ne0,\ b_{20}\geq0$ and
$$h(u,v)= \big(
0,\, u^4h_1(u), u^4h_2(u)+u^2v^2h_3(u)+uv^3h_4(u)+v^4h_5(u,v) \big),
$$
with $h_1(u),h_2(u),h_3(u),h_4(u),h_5(u,v)$  smooth functions.
Several differential geometric invariants of cuspidal edges
are investigated (\cite{ist,MS,MSUY,nuy,ot,front}), 
and coefficients of
\eqref{eq:west2} are such invariants.
According to \cite{MS}, it is
known that $a_{20}$ coincides with the singular curvature
$\kappa_s$, $b_{20}$ coincides with the limiting normal curvature
$\kappa_\nu$, $b_{03}$ coincides with the cuspidal curvature 
$\kappa_c$ and $b_{12}$ coincides with the cusp-directional torsion
$\kappa_t$ at the origin.
The singular curvature is the geodesic curvature
of the singular set with sign,
and the limiting normal curvature is the normal curvature
of the singular set, and they relates to
the shape of cuspidal edge
(see \cite{front}).
The cuspidal curvature measures the wideness
of the cusp, and
the cusp-directional torsion measures
the rotating ratio of the cusp along
the singular set
(see \cite{MS,MSUY}).

On the other hand,
let $U\subset \R^2$ be an open subset and $(u,v)$ a
coordinate system on $U$.
Let 
\begin{equation}\label{eq:BDE}
\omega
=
%\big(a(u,v),b(u,v),c(u,v)\big)
%=
a(u,v) dv^2 + 2b(u,v) dudv + c(u,v) du^2
\end{equation}
be a $2$-tensor on $U$,
where $a,b,c$ are smooth functions, called the {\em coefficients\/}
of $\omega$. 
We call $\omega=0$ 
a {\em binary differential equation\/ $($BDE\/$)$ 
corresponding to\/ $\omega$}.
If $b^2-ac>0$ at $x\in U$, then
$\omega(x)=0$ defines two directions 
in $T_xU$,
and
integral curves of these directions
for two smooth and transverse foliations,
called {\it foliations with respect to\/} $\omega$.
If $b^2-ac=0$ at $x\in U$, then
generically $\omega(x)=0$ defines a single direction,
and the integral curves form in general a family of cusps.
Thus we are mainly interested in behavior of 
integral curves of a BDE near a point
where $b^2-ac$ vanishes.
We call {\it discriminant} of a BDE the set where $b^2-ac=0$.
If the single direction is transverse to 
the discriminant, then 
the BDE is equivalent to $dv^2 +udu^2 = 0$ (\cite{c,dara}).
The normal form for the stable cases
when the single direction is tangent to the discriminant
is obtained in \cite{d1,d2}.
Topological classifications of generic families
of BDEs are obtained in 
\cite{BFT,BTbinary,BTimplicit,joey,faridtari,t}.
On the other hand,
BDEs as geometric foliations on surfaces in three space is
studied in \cite{BF,ggs,guinez1,guinez2,gg,faridtari}.
See \cite{diis,hiiy,if} for other approaches for
geometric foliations.

In this paper, following
\cite{ggs,gs,guinez1,joey,faridtari}, we stick 
to special BDEs from differential geometry
of surface in $\R^3$.
There are three fundamental BDEs
on a regular surface in $\R^3$.
Let $f:(\R^2,0)\to(\R^3,0)$ be a regular surface
with a unit normal vector field $\nu$.
Let 
$\omega_{lc}$,
$\omega_{as}$ and 
$\omega_{ch}$
be $2$-tensors defined by
$$
\begin{array}{rcl}
\omega_{lc}&=&(EM-FL)dv^2+(EN-GL)dudv+(FN - GM)du^2,\\
\omega_{as}&=&N\,dv^2+2M\,dudv+L\,du^2,\\
\omega_{ch}&=&\big(2M(GM-FN) - N(GL-EN)\big)\,dv^2 \\
&&\hspace{10mm}+2\big(M(GL+EN) - 2FLN\big)\,dudv \\
&&\hspace{20mm}+\big(L(GL-EN) -2M(FL-EM)\big)\,du^2,
\end{array}
$$
where $(u,v)$ is a coordinate system on $(\R^2,0)$, and
$$
\pmt{E&F\\F&G}=
\pmt{\inner{f_u}{f_u}&\inner{f_u}{f_v}\\
\inner{f_v}{f_u}&\inner{f_v}{f_v}},\ 
\pmt{L&M\\M&N}=
\pmt{\inner{f_{uu}}{\nu}&\inner{f_{uv}}{\nu}\\
\inner{f_{uv}}{\nu}&\inner{f_{vv}}{\nu}},
$$
where $\inner{~}{~}$ stands for the standard
inner product of $\R^3$.
Each integral curve of the foliations with respect to
$\omega_{lc}$ is called 
a {\it line of curvature\/},
each integral curve with respect to
$\omega_{as}$ is called an
{\it asymptotic curve\/}
and
each integral curve with respect to
$\omega_{ch}$ is called a {\it characteristic curve}\/
or {\it harmonic mean curvature curve}.
Asymptotic curves appear only on 
a domain where the Gaussian curvature $K$ of $f$ is non-negative,
and 
characteristic curves appear only on 
a domain where the Gaussian curvature $K$ of $f$ is non-positive.
Since $\omega_{ch}=0$ can be deformed as
$$
\begin{array}{cl}
&(NH-GK)\,dv^2+2(MH-FK)\,dudv+(LH-EK)\,du^2=0\\[2mm]
\Leftrightarrow&
\dfrac{N\,dv^2+2M\,dudv+L\,du^2}
{G\,dv^2+2F\,dudv+E\,du^2}=\dfrac{K}{H}
\left(=
\dfrac{2}{\kappa_1^{-1}+\kappa_2^{-1}}\right),
\end{array}
$$
where $K$ is the Gaussian curvature, $H$ is
the mean curvature, and $\kappa_1$, $\kappa_2$ are
the principal curvatures of $f$,
we see that
along the characteristic curve, the
normal curvature of it is equal to the harmonic
mean of the principal curvatures
(see \cite{gs}, for example).

\begin{acknowledgements}
This paper is prepared while the author was visiting
Luciana Martins 
at IBILCE - UNESP.
He would like to thank Luciana Martins for fruitful
discussions.
He would also like to thank Farid Tari for
valuable comments.
He would also like to thank the
referee for careful reading and helpful suggestions.
\end{acknowledgements}
%%%%%%%%%%%%%%%%%%%%%%%%%%%%%%%%%%%%%%%%%%%%%%%%%%%%%%%%%%%%%%
\section{Preliminaries on BDEs}
In this section, following  \cite{BTbinary,BTimplicit},
we introduce a method to study 
the configurations of the solution curves of
a BDE.
Let $\omega(u,v)$ be 
the $2$-tensor on $(U;(u,v))\subset \R^2$
as in \eqref{eq:BDE}.
If $(a,b,c)\ne(0,0,0)$ at $x\in U$, then 
$\omega$ is called of {\em Type\/ $1$} at $x$,
and if $(a,b,c)=(0,0,0)$ at $x\in U$, then 
$\omega$ is called of  {\em Type\/ $2$} at $x$.
If $\omega$ is of Type 2 at $x$,
then $\delta=b^2-ac$ has a critical point at $x$.
Since we are interested in local behavior of $\omega$,
we set $x=(0,0)$.
If $\omega$ is of 
Type 1, then $\omega$ defines a single direction at points on
$\Delta$, and if it is
of Type 2, then all directions
in the plane are solutions of $\omega=0$ at that point.
Moreover, if $\omega$ is of Type $2$ at $x$, then
$\Delta$ is not a smooth curve.
We are interested in 
the configurations of the foliations 
of $\omega=0$.
We define the following equivalence.
\begin{definition}
Two binary differential equations 
$\omega_1=0$ and $\omega_2=0$
are {\em equivalent\/} if
there exist a diffeomorphism germ
$\Phi:(\R^2,0)\to(\R^2,0)$
and a non-zero function $\rho:(\R^2,0)\to\R$ such that
$\rho\big(\Phi^*\omega_1\big)=\omega_2$
holds.
If $\Phi$ is a homeomorphism
such that $\Phi$ takes the integral curves of $\omega_1$ 
to those of $\omega_2$,
they are called {\em topologically equivalent}.
\end{definition}
If two binary differential equations are equivalent then 
the configurations of their
foliations can be regarded the same.
To obtain the topological configurations,
we use the following method developed in 
\cite{BTbinary,BTimplicit,ggs,guinez1,faridtari,faridsurvey}.
We separate our consideration into the following three cases:
\begin{itemize}
\item Case 1: $(a(0),b(0),c(0))\ne(0,0,0)$ and $\delta(0)\ne0$ 
(Type 1).
\item Case 2: $(a(0),b(0),c(0))\ne(0,0,0)$ and $\delta(0)=0$ (Type 1).
\item Case 3: $(a(0),b(0),c(0))=(0,0,0)$ (Type 2).
\end{itemize}
Consider the associated surface to $\omega$
$$
\M=\{(u,v,[\alpha,\beta])\in(\R^2,0)\times \R P^1\,|\,
a\beta^2+2b\alpha\beta+c\alpha^2=0\}.
$$
Then $\M$ is a smooth manifold if $b^2-ac\ne0$, 
or if $a_u+2b_up+c_up^2=0$ and
$a_v+2b_vp+c_vp^2=0$ do not have any
common root.
The second condition is equivalent to
\begin{equation}\label{eq:mmfd}
\det\pmt{
a_u&2b_u&c_u&0\\
0&a_u&2b_u&c_u\\
a_v&2b_v&c_v&0\\
0&a_v&2b_v&c_v}(0)\ne0.
\end{equation}
Consider the projection $\pi:\M\to\R^2$,
$\pi(u,v,[\alpha:\beta])=(u,v)$.
Then $\pi^{-1}(u,v)$ consists of two points if $b^2-ac>0$,
and is empty if $b^2-ac<0$.
Let us set $p=\beta/\alpha$ 
(we need to consider the case $q=\alpha/\beta$ for some cases)
and
$\F(u,v,p)=ap^2+2bp+c$.
If $\F_p(0)\ne0$, then $\pi$ is a local diffeomorphism,
and
if $\F_p(0)=0$ and $\F_{pp}(0)\ne0$ hold, 
then $\pi$ is a fold
(a map-germ $h:(\R^2,0)\to(\R^2,0)$ is a {\em fold\/} if 
$h$ is $\A$-equivalent to $(u,v)\mapsto(u,v^2)$).
Let us consider the vector field
$$
\xi(u,v,p)=
p\F_p(u,v,p)\partial_u+\F_p(u,v,p)\partial_v
-\big(p\F_u(u,v,p)+\F_v(u,v,p)\big)\partial p.
$$
Then $\xi$ is tangent to $\M$,
and the projections
$d\pi(\xi_1),d\pi(\xi_2)$ satisfy
$$
\omega(d\pi(\xi_i),d\pi(\xi_i))=0
\quad
(i=1,2),
$$
where $\xi_i=\xi(u,v,p_i)$ and 
$\pi^{-1}(u,v)$ $=$ $\{(u,v,p_1),(u,v,p_2)\}$, or
$\pi^{-1}(u,v)$ $=$ $\{(u,v,p_1)\}$.
To study geometric the foliations of $\omega$,
we use $\xi$ on $\M$.
\subsection{Case 1}\label{sec:case1}
We assume that $b^2-ac>0$ at $0$.
Then one can easily see that
the
BDE \eqref{eq:BDE} %$a\,dv^2+2b\,dudv+c\,du^2=0$ 
is 
equivalent to 
a  BDE $a\,dv^2+2b\,dudv+c\,du^2=0$
which satisfies $(a(0,0),b(0,0),c(0,0))=(1,0,-1)$.
Furthermore, 
it holds that for any $k>0$,
a BDE $a\,dv^2+2b\,dudv+c\,du^2=0$ 
which satisfies $(a(0,0),b(0,0),c(0,0))=(1,0,-1)$
is equivalent to a BDE $a\,dv^2+2b\,dudv+c\,du^2=0$
whose $k$-jet of $(a,b,c)$ at $(0,0)$ is 
$(1,0,-1)$,
moreover, it is equivalent to
a BDE 
\begin{equation}\label{eq:BDEreg}
\omega_{reg}=dv^2-du^2=0
\end{equation}
(\cite[Proposition 4.4]{BTimplicit}).
Therefore the configuration is a pair of 
transverse smooth foliations.
\subsection{Case 2}\label{sec:case2}
We consider the case 2, namely 
$(a(0),b(0),c(0))\ne(0,0,0)$ and $\delta(0)=0$.
\begin{lemma}\hfill 
We\hfill  assume\hfill  that\hfill 
$(a(0),b(0),c(0))\ne(0,0,0)$\hfill  and\\ $\delta(0)$ $=0$.
If\/ $\omega$ as in\/ \eqref{eq:BDE} satisfies
$$
j^1(a,b,c)(0,0)=(0,0,\alpha_0)
+(\alpha_1v,\alpha_2v, \alpha_3u+\alpha_4 v),
$$
then it 
%the BDE 
%$$j^1a(0,0)\,dv^2+2j^1b(0,0)\,dudv+j^1c(0,0)\,du^2=0
%$$ 
is equivalent to
a  BDE\/ $a\,dv^2+2b\,dudv+c\,du^2=0$
which satisfies\/ 
\toukouchange{$$(j^1a(0,0),j^1b(0,0),j^1c(0,0))=(1,0,u)$$}
{$(j^1a(0,0),j^1b(0,0),j^1c(0,0))=(1,0,u)$}
when\/ $\alpha_0\alpha_1\ne0$.
\end{lemma}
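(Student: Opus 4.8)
The plan is to reach the prescribed $1$-jet $(1,0,u)$ by a short chain of elementary equivalences: one coordinate swap, one multiplication by a nonzero function, one shear, and one rescaling of the $u$-axis. Throughout I track only $1$-jets, since the assertion concerns $j^1(a,b,c)$ and higher-order terms are free.

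First I would record what the hypothesis says: $a(0)=b(0)=0$, $c(0)=\alpha_0$, $a_v(0)=\alpha_1$, $b_v(0)=\alpha_2$, $c_u(0)=\alpha_3$, $c_v(0)=\alpha_4$, while the $u$-derivatives of $a$ and $b$ vanish. Since the target has $a(0)=1\ne0$ whereas here $c(0)=\alpha_0\ne0$ and $a(0)=0$, the roles of $dv^2$ and $du^2$ must be interchanged; this is exactly the passage from the chart $p=\beta/\alpha$ to $q=\alpha/\beta$ noted in Section~2. Concretely I apply the linear diffeomorphism $\Phi_1(u,v)=(v,u)$, under which $du$ and $dv$ are swapped, so the pulled-back coefficients are $(a^{(1)},b^{(1)},c^{(1)})(u,v)=(c(v,u),b(v,u),a(v,u))$, with $1$-jets $a^{(1)}=\alpha_0+\alpha_4u+\alpha_3v$, $b^{(1)}=\alpha_2u$, $c^{(1)}=\alpha_1u$. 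Now $a^{(1)}(0)=\alpha_0\ne0$, so multiplying by the nonzero function $\rho=1/a^{(1)}$ makes $a\equiv1$ and leaves $j^1b=\beta u$, $j^1c=\gamma u$, where $\beta=\alpha_2/\alpha_0$ and $\gamma=\alpha_1/\alpha_0\ne0$.

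Next I complete the square to remove the cross term to first order. Applying the shear $\Phi_2(u,v)=(u,v-\tfrac{\beta}{2}u^2)$ (so $dv$ becomes $dv-\beta u\,du$ in the new coordinates) and collecting coefficients gives, modulo terms of order $\ge2$, coefficient $1$ for $dv^2$, coefficient $-2\beta u+2\beta u=0$ for $du\,dv$, and $\gamma u$ for $du^2$; hence the new $1$-jets are $(1,0,\gamma u)$. Finally I rescale the $u$-axis by $\Phi_3(u,v)=(\gamma^{-1/3}u,v)$; the real cube root exists because $\gamma\ne0$. Then $du^2$ acquires the factor $\gamma^{-2/3}$ while $c$ is evaluated at $\gamma^{-1/3}u$, so the coefficient of $u$ in $j^1c$ becomes $\gamma^{-2/3}\cdot\gamma\cdot\gamma^{-1/3}=1$, whereas $a\equiv1$ and $j^1b=0$ are unaffected. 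Composing $\Phi_1$, $\rho$, $\Phi_2$, $\Phi_3$ exhibits the required equivalence.

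The only places where care is needed are the bookkeeping of $1$-jets under the swap $\Phi_1$ (getting the $u/v$ interchange right, which is the main potential source of error) and the verification that each step is a genuine equivalence: $\Phi_1,\Phi_2,\Phi_3$ are diffeomorphism germs and $\rho$ is nonzero near $0$. The hypothesis $\alpha_0\alpha_1\ne0$ enters exactly twice, namely $\alpha_0\ne0$ licenses the normalization $\rho=1/a^{(1)}$, and $\gamma=\alpha_1/\alpha_0\ne0$ licenses the final rescaling, so every part of the hypothesis is used.
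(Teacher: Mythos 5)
Your proof is correct and is essentially the same as the paper's: the paper achieves the swap and the removal of the unwanted linear terms in one stroke, via the change of variables $u=V+\beta_1U^2+\beta_2V^2$, $v=U$ together with the multiplier $R=1+\beta_3U$, where choosing $\beta_1,\beta_2,\beta_3$ kills exactly the three terms that your shear $\Phi_2$ and normalization $\rho=1/a^{(1)}$ eliminate, and it ends with the same (cube-root) rescaling. The only difference is bookkeeping --- your chain of elementary equivalences versus the paper's single change with undetermined coefficients --- so the two arguments coincide in substance.
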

\begin{proof}
We assume that $\alpha_0\alpha_1\ne0$.
Consider $u=V+\beta_1U^2+\beta_2V^2,\ v=U$
and $R=1+\beta_3 U$, where $\beta_1,\beta_2,\beta_3\in\R$.
Then $R\omega(U,V)$ is given by
$$
\begin{array}{l}
\big(
\alpha_1U+O(2)\big)\,dV^2
+
\big(2(\alpha_2+2\alpha_0\beta_1)U+O(2)\big)\,dUdV\\
\hspace{20mm}
+\big(\alpha_0+(\alpha_4+\alpha_0 \beta_3)U+
(\alpha_3+4\alpha_0 \beta_2)V+O(2)
\big)\,dU^2,
\end{array}
$$
where $O(2)$ stands for remainders of order $2$.
Setting 
$\beta_1=-\alpha_2/(2\alpha_0)$, 
$\beta_2=-\alpha_3/(4\alpha_0)$ and
$\beta_3=-\alpha_4/\alpha_0$,
and re-scaling,
we get the desired result.
\end{proof}
Any BDE of the form \eqref{eq:BDE} 
with $(j^1a(0,0),j^1b(0,0),j^1c(0,0))=
(1,0,u)$ is smoothly equivalent to
\begin{equation}\label{eq:BDEcusp}
\omega_{cusp}=dv^2+u\,du^2=0
\end{equation}
(\cite{dara}, see also \cite[Section 4.2]{BTimplicit}, 
\cite[Proposition 3.3-2]{faridsurvey}).
The solutions form a family of cusps.
\subsection{Case 3}\label{sec:case3}
We assume $(a(0),b(0),c(0))=(0,0,0)$.
In this case, $\delta$ has a critical point at $0$.
We assume $\M$ is a smooth manifold.
Since $\F_p(0,0,p)=0$ holds,
$\xi$ has a zero at $(0,0,p)$ %(exceptional fiber)
if and only if $p\F_u(0,0,p)+\F_v(0,0,p)=0$.
This is a cubic equation for $p$.
Set 
\begin{equation}\label{eq:defphi}
\phi_\omega(p)=\phi(p)=p\F_u(0,0,p)+\F_v(0,0,p).
\end{equation}
Let $D_{\omega}=D$ denotes the discriminant of this equation.
If $D>0$ then $\phi(p)=0$ has three distinct real roots,
and if $D<0$ then $\phi(p)=0$ has one real and
two distinct imaginary roots.

When $D>0$,
let $p_1,p_2,p_3$ be the solutions of $\phi(p)=0$ and
$p_1<p_2<p_3$.
When $D<0$,
let $p_1$ be the solution of $\phi(p)=0$.
If $\phi(0)=\F_v(0,0,0)=a_v(0)\ne0$ then
$p_i\ne0$ ($i=1,2,3$ or $i=1$) holds.
We need to understand the singularity of $\xi$ near $p_i$.
If $\F_u(0,0,p_i)\ne0$,
then $\M$ is parameterized by $v$ as $(u(v,p),v,p)$ near $(0,0,p_i)$.
We denote the linear part of $\xi$ by
$$
j^1\xi(0,0,p_i)=
\big(\xi_{11}v+\xi_{12}(p-p_i)\big)\partial v
+
\big(\xi_{21}v+\xi_{22}(p-p_i)\big)\partial p.
$$
Also we remark that since
$\F(u(v,p),v,p)\equiv0$,
it holds that
$\F_uu_v+\F_v\equiv0$,
where
$\equiv$ means that the equality holds identically.
On the other hand, $p_i$ is a solution of $\phi(p)=0$,
and $\phi(p)=p\F_u+\F_v$,
we have
$$
u_v(0,p_i)=p_i.
$$
Furthermore, $\F_p=0$ at $(u,v)=(0,0)$, it holds that
$u_p=0$ at $(v,p)=(0,p_i)$.
We have
$$
\begin{array}{rcll}
\xi_{11}
%&=&\dfrac{\partial}{\partial v}\F_p(u(v,p),v,p)\\
&=&\F_{up}u_v+\F_{pv}\\
\xi_{12}
&=&\F_{up}u_p+\F_{pp}\\
&=&0\quad({\rm at}\ (0,0,p_i))
\\
%\xi_{21}
%&=&-(p\F_{uu}u_v+p\F_{uv}+\F_{uv}u_v+\F_{vv})\\
\xi_{22}
&=&-(\F_u+p\F_{uu}u_p+p\F_{up}+\F_{uv}u_p+\F_{vp})\\
\end{array}
$$
Thus the eigenvalues of the linear part of $\xi$
are
\begin{equation}\label{eq:defslphaphiprime}
\alpha(p_i)=
\F_{up}u_v+\F_{pv}\ 
\text{and}\ 
-\phi'(p_i)=
-(\F_u+p\F_{uu}u_p+p\F_{up}+\F_{uv}u_p+\F_{vp}).
\end{equation}
The configuration of the integral curves of $\omega$
is determined by these information.
The following theorem is known.
% If $D\ne0$ and $\phi(p)$ and 
% $\alpha(p)$ do not have any common root,
% then the 1-jet of the BDE can be reduced to
% $(v,2(b1u+b2v),\pm v)$ $(*)$ (proof is section 3 of \cite{BTbinary}).
% Furthermore, $(*)$ is topologically determined by
% 1-jet (proof is section 4 of \cite{BTbinary}).
% For the topological configuration, the condition
% `non-resonant' (\cite[Proposition 3.4]{BTbinary}) is not needed.
Let $\det\hess\delta(0,0)$ be
the determinant of the Hesse matrix of $\delta(u,v)$ at $(0,0)$.
\begin{theorem}\label{prop:1}
{\rm \cite[Theorem 4.1]{BTbinary}}
Let\/ $\omega$ %=a\,dv^2+2b\,dudv+c\,du^2$
be a\/ $2$-tensor as in\/ \eqref{eq:BDE} and
satisfies\/ $(a,b,c)(0)=(0,0,0)$,
$\det\hess\delta(0,0)<0$,
$D\ne0$
and\/ $\phi(p)$ and\/ $\alpha(p)$ do not have any common roots.
Then the BDE\/ $\omega=0$ is topologically equivalent to
one of the following BDEs\/{\rm :}
\begin{itemize}
\item The case\/ $D>0$\,{\rm :}
$($Then\/ $\phi(p)=0$ has\/ $3$ roots\/ $p_1,p_2,p_3$.$)$
\begin{itemize}
\item $\omega_{3s}=vdv^2+2ududv+vdu^2=0$ $(3$ saddles\/$)$
when\\ $-\phi'(p_i)\alpha(p_i)$ are
negative for all\/ $i=1,2,3$.
\item $\omega_{2s1n}=vdv^2+2(v-u)dudv+vdu^2=0$ $(2$ saddles\/ $+$ $1$ node\/$)$
when\/ $-\phi'(p_i)\alpha(p_i)$ are
two negative and one positive for\/ $i=1,2,3$.
\item $\omega_{1s2n}=3vdv^2-4ududv+3vdu^2=0$ $(1$ saddle\/ $+$ $2$ nodes\/$)$
when\/ $-\phi'(p_i)\alpha(p_i)$ are
one negative and two positive for\/ $i=1,2,3$.
\end{itemize}
\item The case\/ $D<0$\,{\rm :} 
$($Then\/ $\phi(p)=0$ has\/ $1$ root\/ $p_1$.$)$
\begin{itemize}
\item $\omega_{1s}=vdv^2+2ududv+vdu^2=0$ $(1$ saddle\/$)$
when\\ $-\phi'(p_1)\alpha(p_1)$ is
positive.
\item $\omega_{1n}=2vdv^2-ududv+2vdu^2=0$ $(1$ node\/$)$
when\\ $-\phi'(p_1)\alpha(p_1)$ is
negative.
\end{itemize}
\end{itemize}
\end{theorem}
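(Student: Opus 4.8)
The plan is to analyze the equation upstairs on the associated surface $\M$ rather than directly in the plane, exploiting that in Case 3 the entire exceptional fibre $E=\{0\}\times\R P^1$ lies in $\M$ and that $\pi\colon\M\to\R^2$ collapses $E$ to the origin. The smoothness hypothesis \eqref{eq:mmfd} makes this lift well defined. First I would record that $\xi$ is tangent to $E$: since $a(0)=b(0)=c(0)=0$ gives $\F_p(0,0,p)=0$, the $\partial_u$- and $\partial_v$-components of $\xi$ vanish on $E$, so $\xi|_E=-\phi(p)\,\partial p$. Hence the zeros of $\xi$ on $E$ are exactly the roots of the cubic $\phi$, giving three of them when $D>0$ and one when $D<0$ (using the second chart $q=\alpha/\beta$ near $p=\infty$ to account for a possible root at infinity when the leading coefficient of $\phi$ degenerates).

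Next I would use the linearisation of $\xi$ at each singular point $(0,0,p_i)$ already carried out in \eqref{eq:defslphaphiprime}: the two eigenvalues are $\alpha(p_i)$, in the direction transverse to $E$, and $-\phi'(p_i)$, along the invariant circle $E$. The assumption $D\neq0$ makes each $p_i$ a simple root, so $\phi'(p_i)\neq0$, while the assumption that $\phi$ and $\alpha$ have no common root gives $\alpha(p_i)\neq0$; together these make every singularity \emph{hyperbolic}. By the Hartman--Grobman theorem each is topologically a saddle when the eigenvalues have opposite signs, i.e.\ $-\phi'(p_i)\alpha(p_i)<0$, and a node when they have the same sign, i.e.\ $-\phi'(p_i)\alpha(p_i)>0$, which is precisely the dichotomy in the statement.

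It remains to assemble the local pictures and push them down by $\pi$. Here $\det\hess\delta(0,0)<0$ forces $\delta$ to have a Morse saddle, so the discriminant $\{\delta=0\}$ is a pair of transverse smooth branches and the region $\delta>0$ where the BDE has two real directions is a pair of opposite sectors onto which the fold locus of $\pi$ projects. On an annular neighbourhood of $E$ the phase portrait of $\xi$ is, by a Poincar\'e--Bendixson argument, determined by its hyperbolic singularities and their separatrices, and the two hypotheses $D\neq0$ and ``$\phi,\alpha$ have no common root'' rule out exactly the degenerate separatrix connections. I would then invoke structural stability of such a phase portrait to produce, for any two BDEs with the same number and cyclic arrangement of saddles and nodes on $E$, a homeomorphism of neighbourhoods of $E$ carrying one foliation of $\xi$ to the other; composing with the blow-down $\pi$ (a double cover off $E$, collapsing $E$ to $0$) yields the topological equivalence of the planar configurations. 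Finally one checks that the models $\omega_{3s},\dots,\omega_{1n}$ realise each admissible combination by computing $\phi$, $D$ and the signs of $-\phi'(p_i)\alpha(p_i)$.

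The main obstacle is this last global step: converting the local Hartman--Grobman normal forms into one topological model and proving that the combinatorial data, namely the cyclic sequence of saddles and nodes along $E$, is a \emph{complete} invariant. Locally nothing is hard, but matching separatrices across the circle and showing that the blow-down of two structurally equivalent phase portraits gives topologically equivalent planar foliations requires the surface structural-stability machinery together with careful bookkeeping of the sectors cut out by the discriminant; the conditions $D\neq0$ and the absence of common roots of $\phi$ and $\alpha$ are exactly what keeps the situation inside the structurally stable regime.
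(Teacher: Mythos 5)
The paper does not prove Theorem \ref{prop:1} at all: it is quoted from \cite[Theorem 4.1]{BTbinary}, and the machinery your first two paragraphs develop (invariance of the exceptional fibre $E=\{(0,0)\}\times\R P^1$, zeros of $\xi$ at the roots of $\phi$, the eigenvalues $\alpha(p_i)$ and $-\phi'(p_i)$ of \eqref{eq:defslphaphiprime}, hyperbolicity from $D\ne0$ and the no-common-root hypothesis) is exactly what Subsection \ref{sec:case3} sets up before citing that result, and is also how the cited proof begins; so your route is the intended one and your local analysis is correct. One byproduct of your computation is worth recording: your criterion (saddle precisely when $-\phi'(p_i)\alpha(p_i)<0$) is the correct, consistent one, and it disagrees with the printed $D<0$ branch of the statement, where ``positive'' and ``negative'' are interchanged; that branch is a typo in the paper, being inconsistent both with its own $D>0$ branch and with Propositions \ref{prop:asym} and \ref{prop:char}, so do not adjust your signs to match it.

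The genuine gap is in your final, global step, and as described it would fail. The hypotheses $D\ne0$ and ``$\phi,\alpha$ have no common root'' rule out \emph{non-hyperbolic} singular points; they do not rule out separatrix connections. On the contrary, connections are forced: the arcs of $E$ between consecutive zeros of $\phi$ are single orbits of $\xi$, so in the three-saddle case, for instance, $E$ is a polycycle made of three saddle connections. Hence these lifted fields are never structurally stable as vector fields on the surface $\M$, and an appeal to structural stability cannot produce the homeomorphism between two configurations with the same saddle/node data. What is actually needed — and what \cite{BTbinary} does — is a direct classification-by-separatrix-scheme (canonical regions) argument carried out inside the restricted class of fields leaving $E$ invariant, together with the verification that the resulting homeomorphism of a neighbourhood of $E$ is compatible with the two-to-one identification made by $\pi$ over $\{\delta>0\}$, so that it descends to a homeomorphism of the plane; one also needs the index argument showing that three nodes cannot occur, which your outline does not address but which is required for the list of models to be exhaustive. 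As written, your last paragraph states this step rather than proving it.
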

Note that in the case of $D>0$,
all 
$-\phi'(p_i)\alpha(p_i)$ $i=1,2,3$ 
cannot be positive, see \cite{BTbinary}.
The integral curves of the above 
BDEs are in Figures \ref{fig:1}, \ref{fig:2}, \ref{fig:3}
which are taken from \cite{BTbinary}.

\begin{figure}[!ht]
\centering
\includegraphics[width=.25\linewidth]{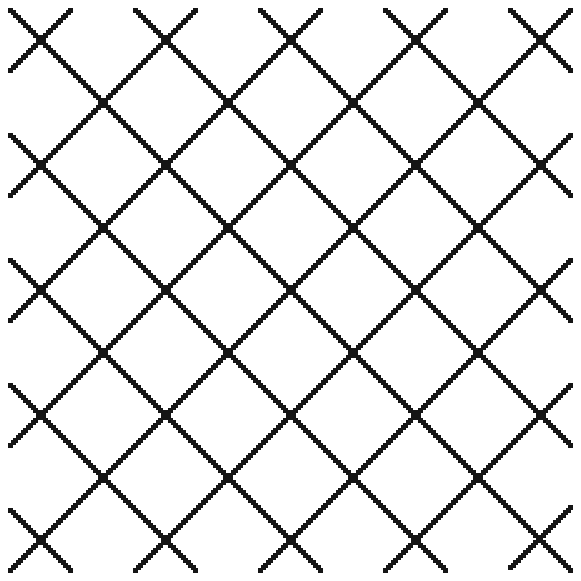}
\hspace{1mm}
\includegraphics[width=.25\linewidth]{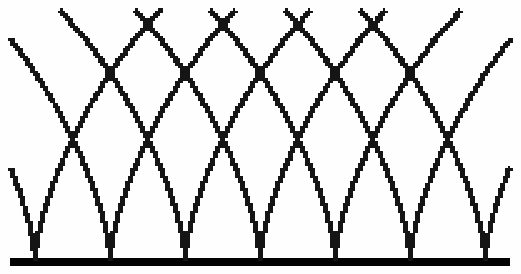}
\caption{Integral curves of
$\omega_{reg}$ and $\omega_{cusp}$.}
\label{fig:1}
\end{figure}

\begin{figure}[!ht]
\centering
\includegraphics[width=.25\linewidth]{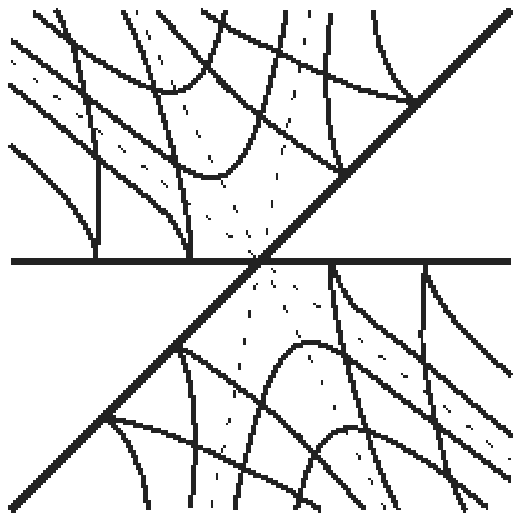}
\hspace{1mm}
\includegraphics[width=.25\linewidth]{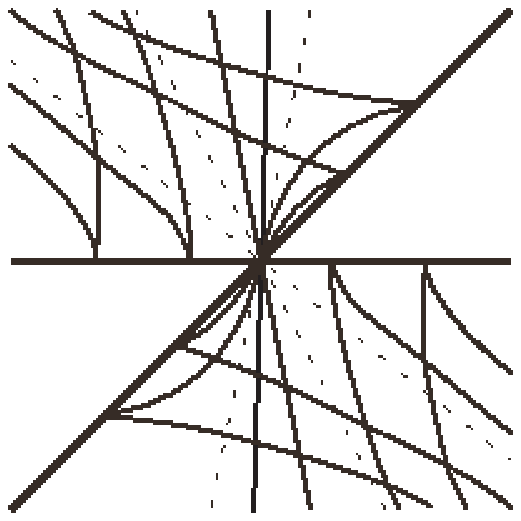}
\hspace{1mm}
\includegraphics[width=.25\linewidth]{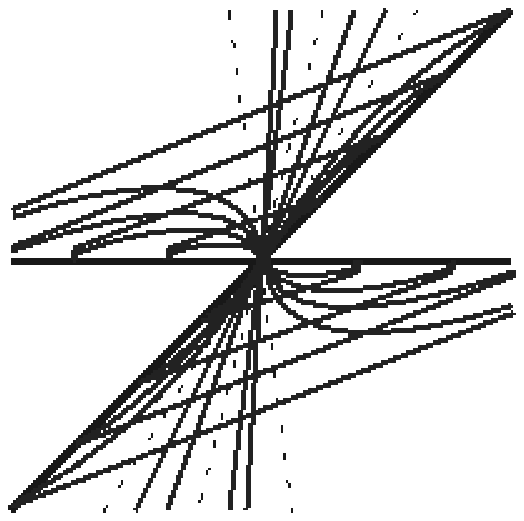}
\caption{Integral curves of
$\omega_{3s}$, $\omega_{2s1n}$ and $\omega_{1s2n}$.
Here and in the rest of the paper, 
the dashed curves are separatrices, 
i.e., they are integral curves 
that separate distinct topological sectors. 
}
\label{fig:2}
\end{figure}

\begin{figure}[!ht]
\centering
\includegraphics[width=.25\linewidth]{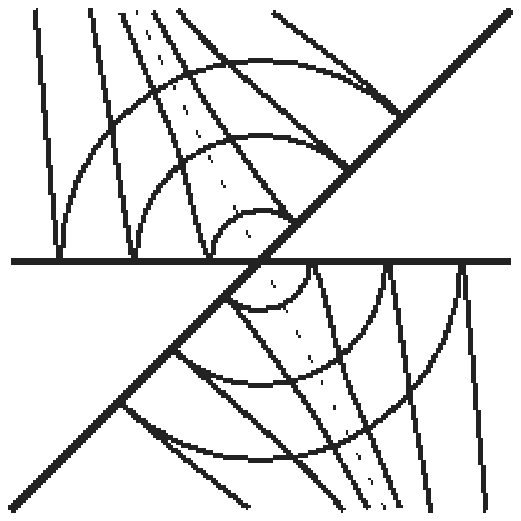}
\hspace{1mm}
\includegraphics[width=.25\linewidth]{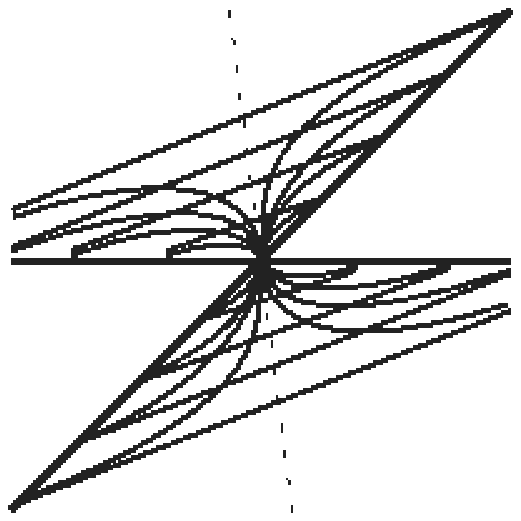}
\caption{Integral curves of
$\omega_{1s}$ and $\omega_{1n}$.}
\label{fig:3}
\end{figure}

\section{Geometric binary differential equations on a cuspidal edge}
Let $f:(\R^2,0)\to(\R^3,0)$ be a parametrization of 
a cuspidal edge.
We take $f$ as in \eqref{eq:west2}.
Then the coefficients of the first fundamental form 
of the cuspidal edge with respect to $f$ are
\begin{equation} \label{coeff:1ff1}
\begin{array}{rcl}
E
&=&
% 1+ \left(a_{20}u + \frac{a_{30}}{2}u^2+4u^3 p(u) +
%u^4 p'(u)\right)^2\\
% & & +\left(b_{20} u + \frac{b_{30}}{2}u^2 + \frac{b_{12}}{2} v^2 + 4 u^3
% q_1(u)\right.
% \\&  & + 2uv^2 q_2(u) + v^3 q_3(u) +
% u^4 q_1'(u)+ u^2v^2 q_2'(u) \\ & & +
%\left. uv^3q_3'(u) + v^4(q_4)_u^2(u,v) \right)^2 \\
%&=&
 1+(a_{20}^2 + b_{20}^2)u^2 
+(a_{20} a_{30}+b_{20} b_{30})u^3 
+  b_{12} b_{20} uv^2 \\[2mm]
& & \hspace{0mm}
+ \dfrac{1}{4}\Big(a_{30}^2 + b_{30}^2 + 2 a_{20} p(0)
+2 b_{20} q_1(0)\Big) u^4 \\[3mm]
 & & \hspace{0mm}
+\dfrac{1}{2}\Big(b_{12} b_{30}+8 b_{20}q_2(0)\Big) u^2v^2
+2b_{20}q_3(0) uv^3\\[3mm]
 & & \hspace{0mm}
+\dfrac{1}{4}b_{12}^2 v^4 + O(5),
\end{array}
\end{equation}
\begin{equation} \label{coeff:1ff2}
\begin{array}{rcl}
%%%%%%%%%%
F
&=& 
 a_{20}uv 
+ \left(\dfrac{1}{2}a_{30}+ b_{12}b_{20}\right)u^2v 
+ \dfrac{1}{2}b_{03}b_{20}uv^2\\[3mm]
& &
+ \dfrac{1}{4}b_{03} b_{12} v^4 
+\left(\dfrac{1}{2}b_{12}^2+4 b_{20} q_4(0,0)\right) u v^3\\[3mm]
& &
+\dfrac{1}{4}\Big(b_{03} b_{30}+12 b_{20} q_3(0)\Big)u^2 v^2\\[3mm]
& & 
\hspace{0mm}+ 
\left(\dfrac{1}{2}b_{12} b_{30}+4p(0)+2 b_{20} q_2(0)\right) u^3v 
+ O(5), \\[3mm]
%%%%%%%%%%
\end{array}
\end{equation}
\begin{equation} \label{coeff:1ff3}
\begin{array}{rcl}
G
&=& 
v^2+ \dfrac{1}{4}b_{03}^2 v^4+b_{03} b_{12} u v^3+b_{12}^2 u^2v^2
+ O(5),
\end{array}
\end{equation}
where $O(n)$ stands for remainders of order $n$ 
$(n=1,2,\ldots)$.
Since $f$ is as in \eqref{eq:west2},
we see $f_u\times (f_v/v)\ne0$.
We set $\nu_2=f_u\times (f_v/v)$,
and 
$L_2=\inner{f_{uu}}{\nu_2}$, 
$M_2=\inner{f_{uv}}{\nu_2}$, 
$N_2=\inner{f_{vv}}{\nu_2}$.
Then we have:
\begin{equation} \label{coeff:2ff}
\begin{array}{rcl}
L_2
&=&
b_{20}+ (b_{30}- a_{20} b_{12}) u -\dfrac{1}{2}a_{20} b_{03} v\\[3mm]
&&\hspace{0mm}
-\big(a_{30} b_{12}-12 q_1(0)+2 a_{20} q_2(0)\big) u^2\\[2mm]
&& \hspace{0mm}
-\big(a_{30} b_{03}+6 a_{20} q_3(0)\big)u v\\[2mm]
&& \hspace{0mm}
+ \big(2 q_2(0)-4 a_{20} q_4(0,0)\big) v^2
+ O(3) \\[2mm]
M_2
&=&
b_{12} v+4  q_2(0) u v + 3 q_3(0) v^2 + O(3)\\[2mm]
N_2
&=&
\dfrac{1}{2}b_{03} v+3 q_3(0)uv+ 8 q_4(0,0) v^2 + O(3).
\end{array}
\end{equation}
We have $L_2=L|\nu_2|$, $M_2=M|\nu_2|$, $N_2=N|\nu_2|$.
It should be remarked that
there exist $C^\infty$-functions
$\widetilde F,\widetilde G,\widetilde N,\widetilde M$
such that
$
G=v^2 \widetilde G$,
$F=v \widetilde F$,
$N_2=v \widetilde N$ and
$M_2=v \widetilde M$
holds. We set 
\begin{equation}\label{eq:fundform}
\widetilde E=E,\ 
\widetilde F=\dfrac{F}{v},\ 
\widetilde G=\dfrac{G}{v^2},\ 
\widetilde L=L_2,\ 
\widetilde M=\dfrac{M_2}{v},\ 
\widetilde N=\dfrac{N_2}{v}.
\end{equation}

\subsection{Lines of principal curvature}\label{sec:lc}
In this subsection we consider the BDE $\omega_{lc}=0$.
Using \eqref{eq:fundform},
$\omega_{lc}=0$ is equivalent to
$$
v^2\left(\widetilde F\widetilde N - v\widetilde G\widetilde M\right)dv^2 
+v\left(\widetilde E\widetilde N-v^2\widetilde  G\widetilde  L\right)dudv
+v\left(\widetilde E\widetilde M-\widetilde F\widetilde  L\right)du^2=0.$$
To determine the topological configuration of $\omega_{lc}=0$,
we factor out $v$ and consider $\widetilde\omega_{lc}=0$, where
$$
\widetilde\omega_{lc}
=
v\left(\widetilde F\widetilde N - v\widetilde G\widetilde M\right)dv^2 
+\left(\widetilde E\widetilde N-v^2\widetilde  G\widetilde  L\right)dudv
+\left(\widetilde E\widetilde M-\widetilde F\widetilde  L\right)du^2.
$$
We have the following proposition.
\begin{proposition}\label{prop:lc}
The BDE\/ $\widetilde\omega_{lc}=0$ is equivalent to 
the BDE\/ $\omega_{reg}=0$.
\end{proposition}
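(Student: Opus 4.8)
The plan is to show that $\widetilde\omega_{lc}=0$ falls into Case 1 of Section \ref{sec:case1}, from which the claimed equivalence to $\omega_{reg}=0$ is immediate. Recall that in Case 1 one assumes $(a,b,c)(0)\neq(0,0,0)$ and $\delta(0)=b^2-ac>0$, and then the BDE is equivalent to $\omega_{reg}=dv^2-du^2=0$ by \cite[Proposition 4.4]{BTimplicit}. So the entire proof reduces to evaluating the coefficients of $\widetilde\omega_{lc}$ at the origin and checking these two conditions. Writing the coefficients as $a=v(\widetilde F\widetilde N-v\widetilde G\widetilde M)$, $2b=\widetilde E\widetilde N-v^2\widetilde G\widetilde L$ and $c=\widetilde E\widetilde M-\widetilde F\widetilde L$, the key point is that everything is already prepared: the expansions \eqref{coeff:1ff1}--\eqref{coeff:2ff} together with the definitions \eqref{eq:fundform} supply the constant terms of $\widetilde E,\widetilde F,\widetilde G,\widetilde L,\widetilde M,\widetilde N$ directly.

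First I would read off the values at the origin from those expansions: $\widetilde E(0)=1$, $\widetilde F(0)=0$ (since $F$ has no $v$-free term after dividing by $v$), $\widetilde G(0)=1$, $\widetilde L(0)=b_{20}$, $\widetilde M(0)=b_{12}$ and $\widetilde N(0)=\tfrac12 b_{03}$. Substituting into the three coefficient expressions and using $v(0)=0$ gives
$$
(a,b,c)(0)=\Big(0,\ \tfrac14 b_{03},\ b_{12}\Big).
$$
The explicit factor $v$ in the $dv^2$-coefficient forces $a(0)=0$, but this does not produce a Type 2 degeneracy: since $b_{03}\neq0$ by the standing assumption on \eqref{eq:west2}, we have $b(0)=\tfrac14 b_{03}\neq0$, so $(a,b,c)(0)\neq(0,0,0)$ and $\widetilde\omega_{lc}$ is of Type 1 at the origin.

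Next I would compute the discriminant at the origin. Because $a(0)=0$, the cross term in $\delta=b^2-ac$ drops out and
$$
\delta(0)=b(0)^2-a(0)c(0)=\tfrac{1}{16}b_{03}^2>0,
$$
again using $b_{03}\neq0$. Thus $\widetilde\omega_{lc}=0$ satisfies both conditions of Case 1, and the equivalence with $\omega_{reg}=0$ follows at once from Section \ref{sec:case1}.

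There is no serious obstacle here, since all the geometric data have been assembled beforehand; the proof is essentially a bookkeeping verification. The only point requiring care is the extraction of the leading terms through the division by powers of $v$ in \eqref{eq:fundform}: one must make sure that the constant terms of $\widetilde F,\widetilde G,\widetilde M,\widetilde N$ are read off correctly, and that the vanishing of $a(0)$ is understood as a harmless feature (compensated by $b(0)\neq0$) rather than a sign of degeneracy. Once that is checked, the positivity $\delta(0)=\tfrac{1}{16}b_{03}^2>0$ is automatic from the cuspidal-edge condition $b_{03}\neq0$.
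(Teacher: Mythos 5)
Your proof is correct and follows essentially the same route as the paper's: both verify that $b(0)$ is a nonzero multiple of $b_{03}$ while $a(0)=0$, conclude $\delta(0)>0$ so that $\widetilde\omega_{lc}$ falls into Case 1, and then invoke the equivalence with $\omega_{reg}=0$ from Section \ref{sec:case1}. Your version merely records the constant terms of all six quantities $\widetilde E,\widetilde F,\widetilde G,\widetilde L,\widetilde M,\widetilde N$ and the explicit value $\delta(0)=\tfrac{1}{16}b_{03}^2$, which the paper leaves implicit.
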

This proposition implies that
the lines of principal curvature of
a cuspidal edge
form a pair of smooth and transverse foliations
in the domain of a parametrization.
\begin{proof}[Proof of Proposition\/ {\rm \ref{prop:lc}}]
Set
$$
(a,b,c)=\left(
v\widetilde F\widetilde N - v^2 \widetilde G \widetilde M,\ 
\dfrac{1}{2}\Big(\widetilde E \widetilde N
-v \widetilde G \widetilde L\Big),\ 
\widetilde E \widetilde M-\widetilde F\widetilde L\right).
$$
Then since 
$b(0)=\widetilde E(0)\widetilde N(0)$,
$\widetilde E(0)=\inner{f_u}{f_u}(0)\ne0$
and
$\widetilde N(0)=\inner{f_{vvv}}{\nu_2}(0)=b_{03}\ne0$
hold, $\widetilde \omega_{lc}$ is as in Case 1.
Moreover, since $a(0)=0$ and $b(0)\ne0$,
we see that $b^2-ac>0$ at $0$.
Hence $\widetilde\omega_{lc}$ is equivalent
to $\omega_{reg}=0$ (See Section \ref{sec:case1}).
\end{proof}
The fact of
the existence of the curvature 
line coordinate system at cuspidal edge
is also shown in \cite{mu}.

An example of picture of this configuration on 
the cuspidal edges
is in Figures \ref{fig:prin}.
Since one family of the integral curves 
are tangent to the null direction on
singular curve,
one family of the integral curves 
near singular curve form the
$(2,3)$-cusps.
A map-germ $(\R,0)\to(\R^3,0)$ is an {\it $(2,3)$-cusp\/}
if it is $\A$-equivalent to $t\mapsto(t^2,t^3)$.

\begin{figure}[!ht]
\centering
\includegraphics[width=.25\linewidth]{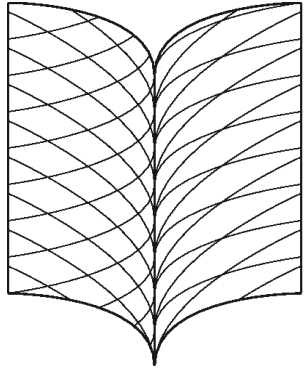}
\caption{Integral curves of
$\widetilde\omega_{lc}$
on images of cuspidal edges.}
\label{fig:prin}
\end{figure}

%\begin{example}\label{ex:figcurvline1}
%Set
%$a_{20}=1,a_{30}=1,b_{20}=0,b_{30}=-3,b_{12}=1,b_{03}=1$.
%The figure is Figure \ref{fig:figcurvline1}.
%\begin{figure}[!ht]
%\centering
%\includegraphics[width=0.5\linewidth]{figcurvline1s.eps}
%\caption{Example \ref{ex:figcurvline1} (trivial)} 
%\label{fig:figcurvline1}
%\end{figure}
%\end{example}

\subsection{Asymptotic curves}
In this subsection we consider $\omega_{as}=0$,
and it is equivalent to
$\widetilde\omega_{as}=N_2\,dv^2+2M_2\,dudv+N_2\,du^2=0$.
Since
$M_2(u,0)=0$, and
$N_2(u,0)=0$,
the singular set i.e., the cuspidal edge curve
is part of discriminant of $\widetilde\omega_{as}=0$,
and, $\partial_v$ is a solution to $\widetilde\omega_{as}=0$
on the singular set.
By \eqref{coeff:2ff},
\begin{equation}\label{eq:delta}
\begin{array}{rcl}
\displaystyle \delta(u,v) &=&
\displaystyle
-\dfrac{1}{2}b_{20}b_{03}v
+\left(\frac{1}{4}a_{20} b_{03}^2 + b_{12}^2
- 8 b_{20}q_4(0,0)\right) v^2 \\[3mm]
&&
+
\dfrac{1}{2}
b_{03}\Big(a_{20} b_{12} - b_{30}-2b_{20}q_3(0)\Big)uv\\[3mm]
&&
\displaystyle
+
\Big(-b_{03}q_2(0) + 6 b_{12} q_3(0) \\[3mm]
&&
\displaystyle
\hspace{15mm}
+ 6 a_{20} b_{03} q_4(0,0)-7b_{20}(q_4)_v(0,0)\Big)v^3\\[3mm]
&&\displaystyle
+
\Bigg(
\frac{1}{4}a_{30} b_{03}^2 + 3 a_{20} b_{03} q_3(0)
-
8 b_{30} q_4(0,0) \\[3mm]
&&\hspace{15mm}
+ 8 b_{12}q_2(0) 
+ 8 a_{20}b_{12} q_4(0,0)
\Bigg)uv^2\\[3mm]
&&
\displaystyle
+
\Bigg(\frac{1}{2}a_{30} b_{03} b_{12}
- 6 b_{03} q_1(0) + a_{20} b_{03} q_2(0)
\\[3mm]
&&\hspace{15mm}
+
3 a_{20} b_{12} q_3(0) - 3 b_{30} q_3(0)\Bigg)u^2v
+
O(4).
\end{array}
\end{equation}
Thus if $b_{20}\ne0$, then 
the BDE is in Case 2.
In this case, by \eqref{coeff:2ff},
$\omega_{as}=0$
is equivalent to $\omega_{cusp}=0$
(see Subsection \ref{sec:case2}).
By \cite[Corollary 3.6]{front}, 
$b_{20}\ne0$ implies that
the Gaussian curvature 
is unbounded and changes sign between 
the two sides of the cuspidal edge.
This means that in this case, the singular set of
cuspidal edge plays the same role as the 
parabolic curve on regular surfaces.
Since 
$b_{20}\ne0$, then 
the BDE is $\omega_{cusp}=0$,
this implies that the folded saddle, the
folded node, the folded
focus in Davydov's classification \cite{d1}
(see also \cite[Section 3.2]{faridsurvey})
does not appear not only cuspidal edges,
but also all singularities written in the form
\eqref{eq:west1} (for instance, cuspidal cross cap).

We assume now $b_{20}=0$. 
Then the BDE is in Case 3.
We have the following.
If $a_{20} b_{12} - b_{30}\ne0$, then
$\delta$ is a Morse function near $0$.
We study the geometric foliation near $0$ 
as in Subsection \ref{sec:case3}.
We consider
$$
\F(u,v,p)=
N_2+2M_2p+L_2p^2.
$$
Then we have
$$
\begin{array}{rcl}
(\F_u,\F_v,\F_p)(u,v,p)
&=&
\big( (N_2)_u+2 (M_2)_up+ (L_2)_up^2,\\
&&\hspace{0mm}
 (N_2)_v+2 (M_2)_vp+ (L_2)_vp^2,
2 M_2+2 L_2p\big),\\
(\F_u,\F_v,\F_p)(0,0,p)
&=&
\big( (L_2)_up^2,
 (N_2)_v+2 (M_2)_vp+ (L_2)_vp^2,
2 Lp\big)\\
&\hspace{-14mm}=&\hspace{-10mm}
\left(\big(b_{30}-a_{20} b_{12}\big) p^2,
\dfrac{1}{2}\big(b_{03}+4 b_{12} p-a_{20} b_{03} p^2\big),
0\right).
\end{array}
$$
In this case, the
left hand side of
\eqref{eq:mmfd}
is $(b_{30}-a_{20}b_{12})^2b_{03}^2/4$.
Thus if $b_{30}-a_{20}b_{12}\ne0$ at $0$,
then $\M=\{\F=0\}$ is a smooth manifold.
We have $\phi_{as}=\phi_{\omega_{as}}$ 
and $D_{as}=D_{\omega_{as}}$ 
defined in Subsection \ref{sec:case3}
as follows
$$
\begin{array}{rcL}
\phi_{as}(p)&=&
 (b_{30}-a_{20} b_{12}) p^3
-\frac{1}{2} a_{20} b_{03} p^2
+2 b_{12} p
+\frac{1}{2}b_{03},\\[2mm]
4D_{as}
&=&
 a_{20}^3 b_{03}^4
+13 a_{20}^2 b_{03}^2 b_{12}^2
-b_{30} (128 b_{12}^3+27 b_{03}^2 b_{30})\\
&&
+2 a_{20} (64 b_{12}^4+9 b_{03}^2 b_{12} b_{30}).
\end{array}
$$
Furthermore, $\alpha(p)$ 
is given by
$$
\alpha(p)=
2 (b_{30}-a_{20} b_{12}) p^2-a_{20} b_{03} p+2 b_{12}.
$$
If $p_i$ is a solution of $\phi_{as}(p)=0$ and
$p\alpha(p)-2\phi_{as}(p)=-b_{03} - 2 b_{12} p$ holds,
then $\alpha(p_i)\ne0$ if and only if $-b_{03} - 2 b_{12} p_i\ne0$.
Assume that $b_{12}\ne0$.
Substituting $p=-b_{03}/(2 b_{12})$ into $\phi_{as}(p)$,
we get
$$
\phi_{as}\left(\dfrac{-b_{03}}{2 b_{12}}\right)
=
-\dfrac{1}{8b_{12}^3} b_{03} \big(4 + b_{03}^2 b_{30}\big).
$$
If $b_{12}=0$, then $p\alpha(p)-2\phi_{as}(p)=-b_{03}\ne0$.
Since we assume that $b_{30}-a_{20} b_{12}\ne0$,
 if $b_{12}=0$ then $b_{30}\ne0$.
Thus we get
$\alpha(p_i)\ne0$ if and only if
$$
4b_{12}^3 + b_{03}^2 b_{30}\ne0.
$$
We can now use Theorem \ref{prop:1}
to obtain the following result.
\begin{proposition}\label{prop:asym}
If\/ $b_{20}\ne0$, 
then\/ $\omega_{as}$ is equivalent
to\/ $\omega_{cusp}=0$.

If\/ $b_{20}=0$, $b_{30}-a_{20} b_{12}\ne0$,
$D_{as}\ne0$ and\/ $4b_{12}^3 + b_{03}^2 b_{30}\ne0$,
then\/ $\omega_{as}$ is topologically equivalent
to one of the following\/$:$
\begin{itemize}
\item The case\/ $D_{as}>0$\,{\rm :} 
$($Then\/ $\phi_{as}(p)=0$ has\/ $3$ roots\/ $p_1,p_2,p_3)$
\begin{itemize}
\item $\omega_{3s}$ $($$-\phi_{as}'(p_i)\alpha(p_i)$ are
negative for all\/ $i=1,2,3)$.
\item $\omega_{2s1n}$ $($$-\phi_{as}'(p_i)\alpha(p_i)$ are
two negative and one positive for\/ $i=1,2,3)$.
\item $\omega_{1s2n}$ $($$-\phi_{as}'(p_i)\alpha(p_i)$ are
one negative and two positive for\/ $i=1,2,3)$.
\end{itemize}
\item The case $D_{as}<0$\,{\rm :} 
$($Then\/ $\phi_{as}(p)=0$ has\/ $1$ root\/ $p_1)$
\begin{itemize}
\item $\omega_{1s}$ $($$-\phi_{as}'(p_1)\alpha(p_1)$ is
negative\/$)$.
\item $\omega_{1n}$ $($$-\phi_{as}'(p_1)\alpha(p_1)$ is
positive\/$)$.
\end{itemize}
\end{itemize}
\end{proposition}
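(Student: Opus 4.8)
The plan is to treat the two assertions by two different normal-form mechanisms, and in both cases the bulk of the needed data has already been extracted from \eqref{coeff:2ff} and \eqref{eq:delta} in the discussion preceding the statement; the proof is then mostly a matter of checking hypotheses.

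For the first assertion I would note that at the origin the coefficients of $\widetilde\omega_{as}$ are $(N_2,M_2,L_2)(0)=(0,0,b_{20})$, so for $b_{20}\ne0$ the tensor is of Type~$1$ with $\delta(0)=0$, i.e.\ we are in Case~$2$. Reading the $1$-jets off \eqref{coeff:2ff} gives $j^1 N_2=\tfrac12 b_{03}v$, $j^1 M_2=b_{12}v$ and $j^1 L_2=b_{20}+(b_{30}-a_{20}b_{12})u-\tfrac12 a_{20}b_{03}v$, which is precisely the shape $(0,0,\alpha_0)+(\alpha_1 v,\alpha_2 v,\alpha_3 u+\alpha_4 v)$ appearing in the Lemma of Subsection~\ref{sec:case2}, with $\alpha_0=b_{20}$ and $\alpha_1=\tfrac12 b_{03}$. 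Since $b_{03}\ne0$, the hypothesis $\alpha_0\alpha_1\ne0$ of that Lemma holds exactly when $b_{20}\ne0$; the Lemma normalizes the $1$-jet to $(1,0,u)$, and the normal form quoted just after \eqref{eq:BDEcusp} then yields equivalence to $\omega_{cusp}$.

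For the second assertion ($b_{20}=0$) the plan is to verify, one by one, the four hypotheses of Theorem~\ref{prop:1} and invoke it directly. First, $(N_2,M_2,L_2)(0)=(0,0,0)$, so $\widetilde\omega_{as}$ is of Type~$2$. Second, the smoothness of $\M$ follows from \eqref{eq:mmfd}, whose determinant was already found to equal $(b_{30}-a_{20}b_{12})^2 b_{03}^2/4$, nonzero under the hypothesis $b_{30}-a_{20}b_{12}\ne0$. Third, $D_{as}\ne0$ is assumed. Fourth, the requirement that $\phi_{as}$ and $\alpha$ share no root is exactly the computation already carried out: from $p\alpha(p)-2\phi_{as}(p)=-b_{03}-2b_{12}p$ one gets $\alpha(p_i)\ne0$ at every root $p_i$ of $\phi_{as}$ precisely when $4b_{12}^3+b_{03}^2 b_{30}\ne0$.

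The one hypothesis of Theorem~\ref{prop:1} not yet explicitly recorded is $\det\hess\delta(0,0)<0$, and this is the step I expect to require the only genuine (if still routine) computation. Putting $b_{20}=0$ in \eqref{eq:delta} annihilates the linear term and the quadratic part becomes $(\tfrac14 a_{20}b_{03}^2+b_{12}^2)v^2+\tfrac12 b_{03}(a_{20}b_{12}-b_{30})uv$, with no $u^2$-term. Hence $\delta_{uu}(0,0)=0$ and $\det\hess\delta(0,0)=-\delta_{uv}(0,0)^2=-\tfrac14 b_{03}^2(b_{30}-a_{20}b_{12})^2$, which is strictly negative precisely because $b_{03}\ne0$ and $b_{30}-a_{20}b_{12}\ne0$. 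With all four conditions verified, Theorem~\ref{prop:1} applies verbatim, and the classification according to the signs of the products $-\phi_{as}'(p_i)\alpha(p_i)$ produces exactly the listed normal forms $\omega_{3s},\omega_{2s1n},\omega_{1s2n}$ (for $D_{as}>0$) and $\omega_{1s},\omega_{1n}$ (for $D_{as}<0$).
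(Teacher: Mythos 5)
Your proposal is correct and follows essentially the same route as the paper, whose own proof is precisely the discussion preceding the proposition: the case $b_{20}\ne0$ is dispatched by the Lemma of Subsection~\ref{sec:case2} (with $\alpha_0=b_{20}$, $\alpha_1=\tfrac12 b_{03}$, so $\alpha_0\alpha_1\ne0$ exactly when $b_{20}\ne0$) together with the normal form \eqref{eq:BDEcusp}, and the case $b_{20}=0$ by checking the hypotheses of Theorem~\ref{prop:1} via the already-computed value $(b_{30}-a_{20}b_{12})^2b_{03}^2/4$ of \eqref{eq:mmfd}, the assumption $D_{as}\ne0$, and the common-root criterion $4b_{12}^3+b_{03}^2b_{30}\ne0$. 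Your explicit computation $\det\hess\delta(0,0)=-\tfrac14 b_{03}^2(b_{30}-a_{20}b_{12})^2<0$ spells out a hypothesis the paper records only as ``$\delta$ is a Morse function,'' which is a welcome clarification but not a different argument.
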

\begin{remark}
We observe that
by the Proposition \ref{prop:asym},
$b_{20}$, $b_{30}-a_{20} b_{12}$ and $4b_{12}^3 + b_{03}^2 b_{30}$
have geometric meanings.
In fact, $b_{20}$ is the limiting normal curvature
%defined in \cite{front}
%(see \cite[Theorem 4.4]{MS} and \cite{mn,MSUY}),
and $b_{30}-a_{20} b_{12}$ coincides with the derivation 
of limiting normal curvature (see \cite{MSUY}).
The invariant $4b_{12}^3 + b_{03}^2 b_{30}$ is related to
the singularities of parallel surfaces of the cuspidal edge
(see \cite[Lemma 3.1]{teramoto}).
\end{remark}

%%%%%%%%%%%%%%%%%%%%%%%%%%%%%%%%%%%%%%%%%%%%%%%%%%%%%%%%%%%%%%
\subsection{Characteristic curves}
We consider the BDE $\omega_{ch}=0$.
Using \eqref{eq:fundform},
we show that $\omega_{ch}=0$ is equivalent to
$v(a\,dv^2+2b\,dudv+c\,du^2)=0$,
where
$$
\begin{array}{rcL}
a&=&
v \Big(\widetilde E \widetilde N^2 + 
   \big(-\widetilde G \widetilde L \widetilde N 
     - 2 \widetilde F \widetilde M \widetilde N\big)v
     + 2 \widetilde G \widetilde M^2 v^2\Big)\\
b&=&
 v \big(-2 \widetilde F \widetilde L \widetilde N 
     + \widetilde E \widetilde M \widetilde N 
     + \widetilde G \widetilde L \widetilde M v\big)\\
c&=&
-\widetilde E \widetilde L \widetilde N+
\big(\widetilde G \widetilde L^2 
- 2 \widetilde F \widetilde L\widetilde M
+2 \widetilde E\widetilde M^2
\big) v.
\end{array}
$$
We factor out $v$, so $\omega_{ch}=0$
is topologically equivalent to 
$\tilde\omega_{ch}=a\,dv^2+2b\,dudv+c\,du^2=0$.
Since
$ a(u,0)=0$, and
$ b(u,0)=0$,
the singular set is a part of discriminant of $\tilde\omega_{ch}=0$,
and $\partial_v$ is a solution to $\tilde\omega_{ch}=0$
on the singular set.
The function $\delta=b^2-ac$ is given by
$$
\begin{array}{l}
v\Big[\widetilde E^2 \widetilde L \widetilde N^3 
+\big(4 \widetilde F^2 \widetilde L^2 
-2 \widetilde E \widetilde G \widetilde L^2
-4 \widetilde E \widetilde F \widetilde L \widetilde M
-2 \widetilde E^2 \widetilde M^2\big) \widetilde N^2v \\
\hspace{5mm}
+
\big(
- \widetilde G^2 \widetilde L^3 
-4\widetilde F \widetilde G \widetilde L^2 \widetilde M
-4\widetilde F^2 \widetilde L \widetilde M^2
+6\widetilde E \widetilde G \widetilde L \widetilde M^2
+4\widetilde E \widetilde F \widetilde M^3\big) \widetilde Nv^2 \\
\hspace{10mm}
+
\big(
  \widetilde G \widetilde L^2
-4\widetilde F \widetilde L \widetilde M 
+4\widetilde E \widetilde M^2\big) \widetilde G \widetilde M^2v^3\Big].
\end{array}
$$
When $f$ is taken as in \eqref{eq:west2},
we have
$$
\begin{array}{RcL}
a&=&\frac{1}{4}b_{03}^2v+O(2),\\[2mm]
b&=&\frac{1}{2}b_{12}b_{03}v+O(2),\\[2mm]
c&=&-\frac{1}{2}b_{20}b_{03}
+\frac{1}{2}\big(a_{20} b_{12} b_{03}
             -b_{30} b_{03} -6 b_{20} q_3(0)\big)u\\[2mm]
&&\hspace{10mm}
+\frac{1}{4}\Big[
a_{20} b_{03}^2+8 b_{12}^2+4b_{20} \big(b_{20}-8 q_4(0,0)\big)
\Big]v
+O(2),\\[2mm]
\delta&=&
\dfrac{1}{8}b_{03}^3 b_{20}v
+
\dfrac{1}{8}b_{03}^2\big(-a_{20} b_{03} b_{12}
        +b_{03} b_{30}+18 b_{20} q_3(0)\big)u v \\[2mm]
&&\hspace{1mm}
-
\dfrac{1}{16}b_{03}^2\bigg[a_{20} b_{03}^2+4 
\Big[b_{12}^2+2 b_{20} \big(b_{20}-12 q_4(0,0)\big)\Big]\bigg]
 v^2 +O(3).
\end{array}
$$
%Then we see that $\delta(u,0)=0$.
%Thus like as the case of asymptotic curves,
%the cuspidal edge
%curve is part of discriminant of $\omega_{ch}=0$.
Since $b_{03}\ne0$, 
if $b_{20}\ne0$, then $\widetilde\omega_{ch}$
is as in Case 2, and
if $b_{20}=0$, then it is as in Case 3.
In the following, we divide our consideration into these two cases.
\subsubsection*{The case\/ $b_{20}\ne0$ {\rm :}}
By the argument in Subsection \ref{sec:case2},
$\widetilde\omega_{ch}=0$ is equivalent to $\omega_{cusp}=0$.
%Geometric configuration of the foliation is
%the same as the case of asymptotic curves.
In this case, by \eqref{coeff:2ff},
$\omega_{as}=0$
is equivalent to $\omega_{cusp}=0$
(see Subsection \ref{sec:case2}).
Like as the case of $\omega_{as}=0$,
the singular set of
cuspidal edge plays the same role as the 
parabolic curve on regular surfaces.
Moreover, the folded saddle, the
folded node, the folded
focus do not appear.

\subsubsection*{The case\/ $b_{20}=0$ {\rm :}}
The left hand side of \eqref{eq:mmfd} is
$b_{03}^6(a_{20}b_{12}-b_{30})^2/64$.
Thus $\M$ is a smooth manifold if $a_{20}b_{12}-b_{30}\ne0$ at $0$.
Furthermore,
 $\delta$ is of Morse type if and only if $a_{20} b_{12}-b_{30}\ne0$.
This is exactly the same conditions 
as the case of asymptotic curves.
We assume that $a_{20} b_{12}-b_{30}\ne0$.
We need to consider
$\F(u,v,p)=a+2bp+cp^2$
and
$\phi_{ch}(p)=(p\F_u+\F_v)(0,0,p)$.
We have
\begin{equation}
\label{eq:charphi}
\begin{array}{l}
4\phi_{ch}(p)
=
b_{03}^2 
+
4 b_{03} b_{12} p \\
\hspace{18mm}
+ (a_{20} b_{03}^2 + 8 b_{12}^2) p^2 
+ (2 a_{20} b_{03} b_{12} - 2 b_{03} b_{30}) p^3,
\end{array}
\end{equation}
and
the discriminant $D_{ch}=D_{\omega_{ch}}$ of the cubic $\phi_{ch}$ is 
given by
\begin{equation}
\label{eq:chardelta1}
\begin{array}{L}
D_{ch}
=-\dfrac{b_{03}^2}{64}\Big(
a_{20}^3 b_{03}^6+11 a_{20}^2 b_{03}^4 b_{12}^2
\\
\hspace{20mm}
-2 a_{20} (16 b_{03}^2 b_{12}^4+9 b_{03}^4 b_{12} b_{30})
\\
\hspace{40mm}
+256 b_{12}^6
+160 b_{03}^2 b_{12}^3 b_{30}+27 b_{03}^4 b_{30}^2
\Big).
\end{array}
\end{equation}
Furthermore, $4\alpha$ is given by
$$
4\alpha(p)=
2b_{03}(a_{20}b_{12}-b_{30})p^2+(a_{20}b_{03}^2+8b_{12}^2)p+2b_{12}b_{03}.
$$
Then we have
\begin{equation}
\label{eq:charphipalpha}
\begin{array}{l}
4\phi_{ch}'(p)\alpha(p)=
4 b_{03}^2 b_{12}^2
+4 b_{03} b_{12} (a_{20} b_{03}^2+8 b_{12}^2) p\\
\hspace{22mm}
+(a_{20}^2 b_{03}^4+26 a_{20} b_{03}^2 b_{12}^2
 +64 b_{12}^4-10 b_{03}^2 b_{12} b_{30}) p^2\\
\hspace{22mm}
 +5 b_{03} (a_{20} b_{03}^2+8 b_{12}^2)
 (a_{20} b_{12}-b_{30}) p^3\\
\hspace{22mm}
 +6 b_{03}^2 (-a_{20} b_{12}+b_{30})^2 p^4,
\end{array}
\end{equation}
and the condition that $\phi_{ch}(p)$ and $\alpha(p)$ do
not have any common roots is given by
$4b_{12}^3 + b_{03}^2 b_{30}\ne0$.
We summerize the above discussion in the following
proposition.
\begin{proposition}\label{prop:char}
If\/ $b_{20}\ne0$, 
then\/ $\omega_{ch}$ is equivalent
to\/ $\omega_{cusp}=0$.

If\/ $b_{20}=0$, $b_{30}-a_{20} b_{12}\ne0$,
$D_{ch}\ne0$ and\/ $4b_{12}^3 + b_{03}^2 b_{30}\ne0$,
then\/ $\omega_{ch}$ is topologically equivalent
to one of the following\/{\rm :}
\begin{itemize}
\item The case\/ $D_{ch}>0$\,{\rm :} 
$($Then\/ $\phi_{ch}(p)=0$ has\/ $3$ roots\/ $p_1,p_2,p_3)$
\begin{itemize}
\item $\omega_{3s}$ $($$-\phi_{ch}'(p_i)\alpha(p_i)$ are
negative for all\/ $i=1,2,3)$.
\item $\omega_{2s1n}$ $($$-\phi_{ch}'(p_i)\alpha(p_i)$ are
two negative and one positive for\/ $i=1,2,3)$.
\item $\omega_{1s2n}$ $($$-\phi_{ch}'(p_i)\alpha(p_i)$ are
one negative and two positive for\/ $i=1,2,3)$.
\end{itemize}
\item The case $D_{ch}<0$\,{\rm :} 
$($Then\/ $\phi_{ch}(p)=0$ has\/ $1$ root\/ $p_1)$
\begin{itemize}
\item $\omega_{1s}$ $($$-\phi_{ch}'(p_1)\alpha(p_1)$ is
negative\/$)$.
\item $\omega_{1n}$ $($$-\phi_{ch}'(p_1)\alpha(p_1)$ is
positive\/$)$.
\end{itemize}
\end{itemize}
\end{proposition}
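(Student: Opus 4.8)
The plan is to mirror the treatment of the asymptotic curves in Proposition~\ref{prop:asym}, reducing everything to an application of Theorem~\ref{prop:1} after separating the two cases by the vanishing of $b_{20}$. All the coefficient expansions needed are already in hand from the computation of $a$, $b$, $c$ and $\delta$ above, so the remaining task is to verify the hypotheses of the relevant normal-form results.

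First, suppose $b_{20}\ne0$. From the expansions, $a=\tfrac14 b_{03}^2 v+O(2)$, $b=\tfrac12 b_{12}b_{03}v+O(2)$ and $c=-\tfrac12 b_{20}b_{03}+O(1)$, so $(a,b,c)(0)=(0,0,-\tfrac12 b_{20}b_{03})\ne(0,0,0)$ while $\delta(0)=0$; hence $\widetilde\omega_{ch}$ is of Type~$1$ with vanishing discriminant, i.e.\ Case~$2$. Writing the $1$-jet in the form of the Lemma in Subsection~\ref{sec:case2}, the constant $\alpha_0$ is the value $c(0)=-\tfrac12 b_{20}b_{03}$ and the leading coefficient $\alpha_1$ of $a$ is $\tfrac14 b_{03}^2$. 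Since $b_{03}\ne0$ and $b_{20}\ne0$, we have $\alpha_0\alpha_1\ne0$, so the Lemma applies and $\widetilde\omega_{ch}=0$ is equivalent to the $(1,0,u)$ normal form, hence to $\omega_{cusp}=0$ by \eqref{eq:BDEcusp}.

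Next suppose $b_{20}=0$, so that $\widetilde\omega_{ch}$ is of Type~$2$, i.e.\ Case~$3$. I would verify in turn the four hypotheses of Theorem~\ref{prop:1}. The condition $(a,b,c)(0)=(0,0,0)$ is immediate, since the constant term of $c$ was $-\tfrac12 b_{20}b_{03}=0$. Smoothness of $\M$ follows because the left-hand side of \eqref{eq:mmfd} equals $b_{03}^6(a_{20}b_{12}-b_{30})^2/64$, which is nonzero under the assumption $b_{30}-a_{20}b_{12}\ne0$. For the saddle condition, setting $b_{20}=0$ kills the linear term of $\delta$, and the quadratic part has vanishing $u^2$-coefficient, $uv$-coefficient $\tfrac18 b_{03}^3(b_{30}-a_{20}b_{12})$ and $v^2$-coefficient $-\tfrac1{16}b_{03}^2(a_{20}b_{03}^2+4b_{12}^2)$; a direct computation then gives $\det\hess\delta(0,0)=-b_{03}^6(b_{30}-a_{20}b_{12})^2/64<0$. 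Finally, the no-common-root hypothesis for $\phi_{ch}$ and $\alpha$ is exactly the condition $4b_{12}^3+b_{03}^2 b_{30}\ne0$ recorded above. With $D_{ch}\ne0$ assumed, Theorem~\ref{prop:1} applies verbatim, and the resulting normal form is selected by the sign of $D_{ch}$ together with the signs of the products $-\phi_{ch}'(p_i)\alpha(p_i)$, which yields the stated list.

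The bulk of the argument is bookkeeping of the already-computed jets; the one point deserving care is the saddle condition, namely that the Morse nondegeneracy $b_{30}-a_{20}b_{12}\ne0$ forces $\det\hess\delta(0,0)<0$ rather than merely $\ne0$. This is what guarantees that the discriminant is a crossing rather than an isolated point, and so places us squarely in the hypotheses of Theorem~\ref{prop:1}; it follows cleanly here because the $u^2$-coefficient of $\delta$ vanishes when $b_{20}=0$, making the Hessian determinant equal to minus the square of the $uv$-coefficient.
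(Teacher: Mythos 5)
Your proposal is correct and follows essentially the same route as the paper, whose proof of Proposition~\ref{prop:char} is precisely the discussion preceding it: the split on $b_{20}$, the Case~2 Lemma (with $\alpha_0=-\tfrac12 b_{20}b_{03}$, $\alpha_1=\tfrac14 b_{03}^2$) giving $\omega_{cusp}$, and the verification of the hypotheses of Theorem~\ref{prop:1} (smoothness of $\M$ via \eqref{eq:mmfd}, the Morse condition on $\delta$, and the common-root condition $4b_{12}^3+b_{03}^2b_{30}\ne0$) when $b_{20}=0$. Your explicit computation that $\det\hess\delta(0,0)=-b_{03}^6(b_{30}-a_{20}b_{12})^2/64$ is strictly negative (not merely nonzero) is a worthwhile detail that the paper leaves implicit in the phrase ``$\delta$ is of Morse type,'' and it is exactly right because $\delta$ is divisible by $v$, so its quadratic part has no $u^2$ term.
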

\begin{remark}
We remark that if $b_{12}=0$, then $D_{as}=D_{ch}$.
Namely, the configrations of foliations with respect to
$\omega_{as}$ and $\omega_{ch}$ are of the same type.
%See \cite[Appendix A]{MS} and \cite{ist} for 
%geometric meanings of $b_{12}$
%which is called the {\em cusp-directional torsion}\/ or the
%{\em cuspidal torsion}\/
%of cuspidal edge.
\end{remark}

Examples of pictures of these configrations on 
the cuspidal edges
are in Figures \ref{fig:cecusp},
\ref{fig:ce2} and \ref{fig:ce3}.
Since the integral curves emanate from singular curve
along the null direction,
integral curves near the singular curve do not form the
$(2,3)$-cusp but form the $(3,4)$-cusps
(see Appendix \ref{sec:cri}).

\begin{figure}[!ht]
\centering
\includegraphics[width=.6\linewidth]{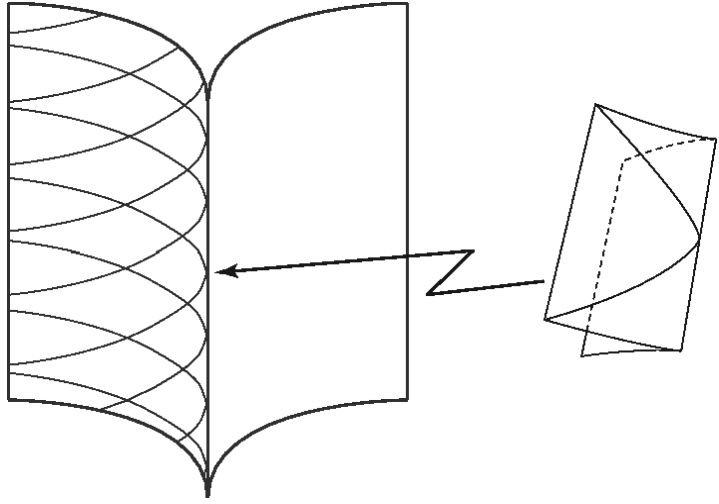}
\caption{Integral curves of
$\omega_{cusp}$
on images of cuspidal edges.}
\label{fig:cecusp}
\end{figure}
\begin{figure}[!ht]
\centering
\includegraphics[width=.3\linewidth]{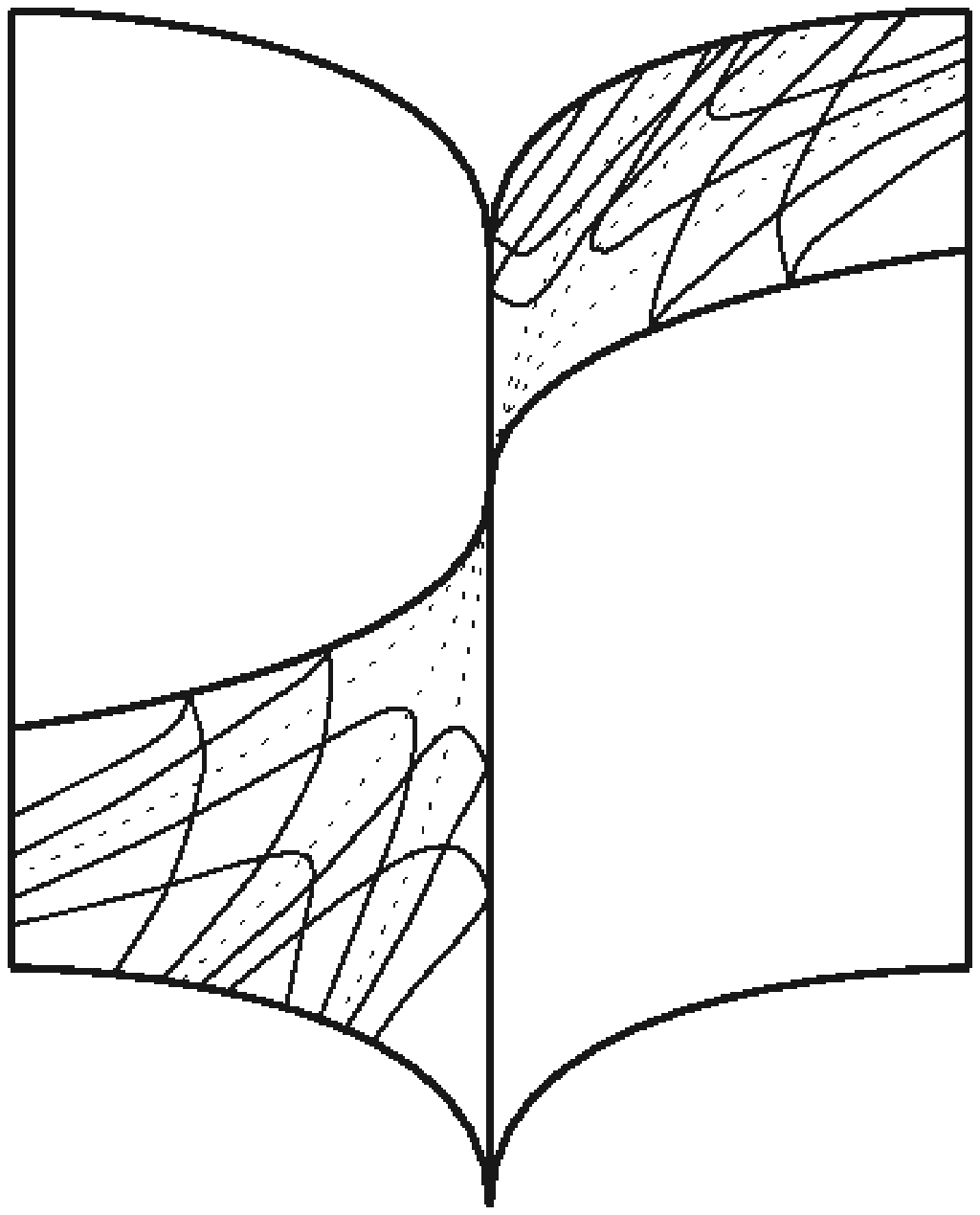}
\hspace{1mm}
\includegraphics[width=.3\linewidth]{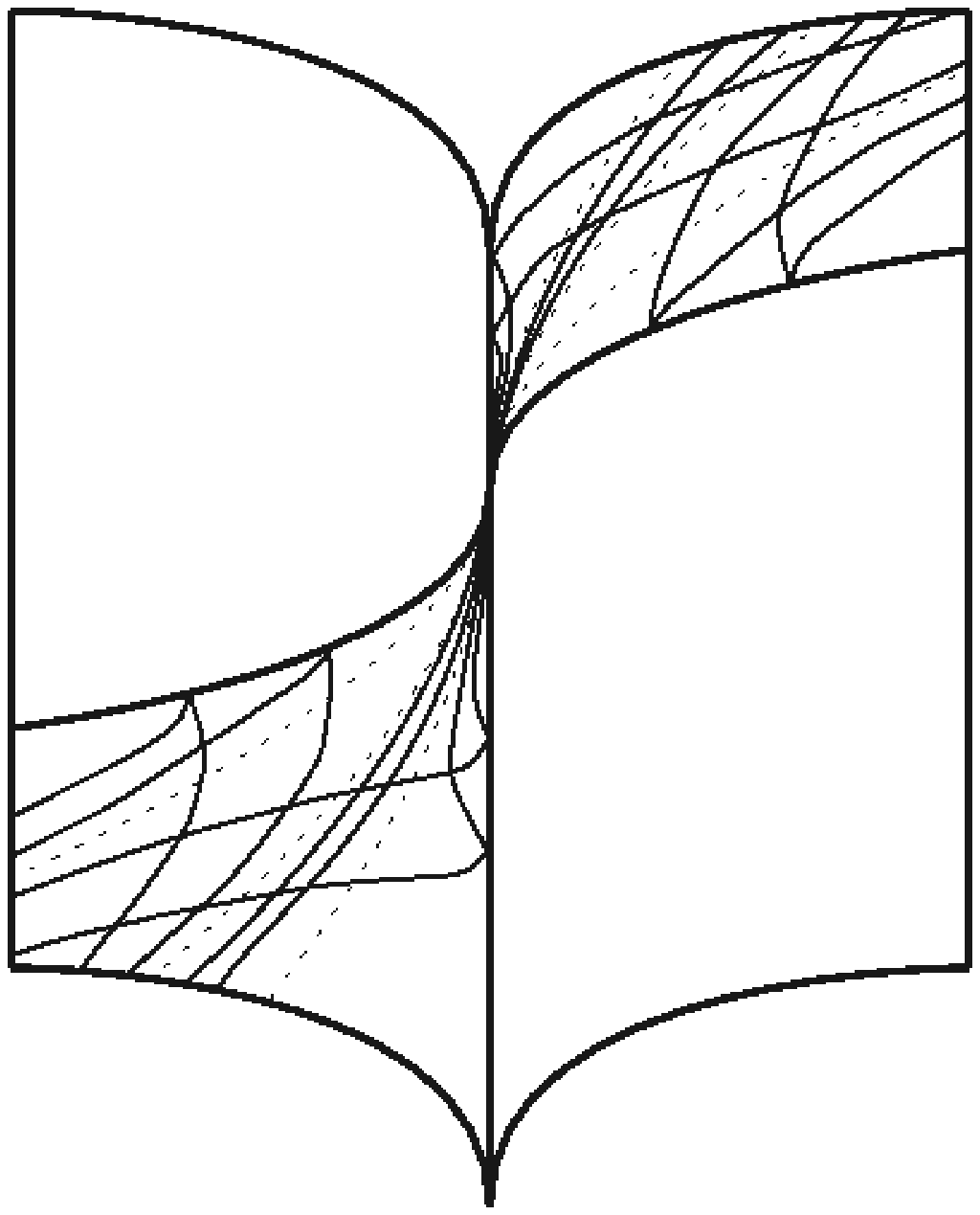}
\hspace{1mm}
\includegraphics[width=.3\linewidth]{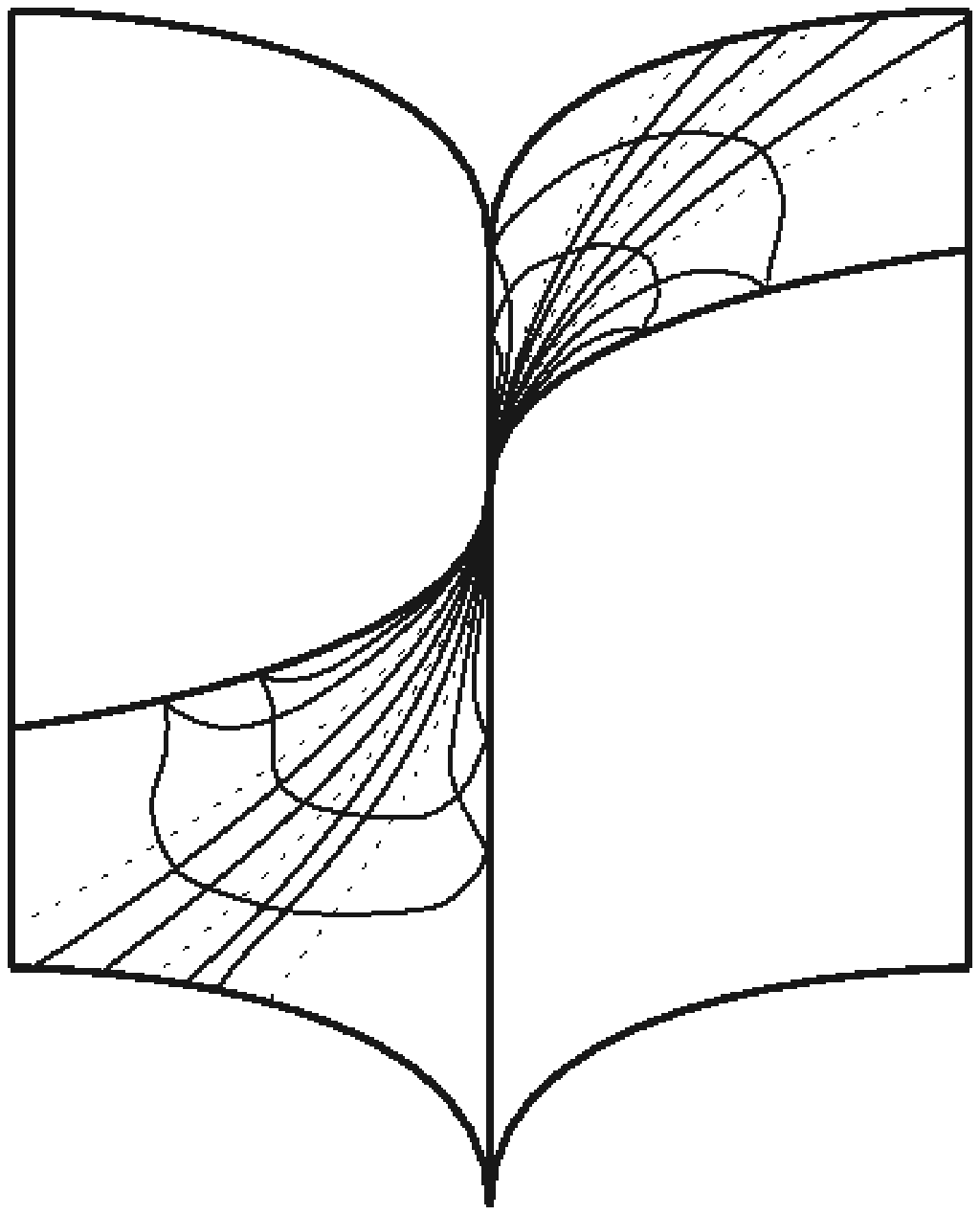}
\caption{Integral curves of
$\omega_{3s}$,  $\omega_{2s1n}$ and $\omega_{1s2n}$ 
on images of cuspidal edges.}
\label{fig:ce2}
\end{figure}
\begin{figure}[!ht]
\centering
\includegraphics[width=.3\linewidth]{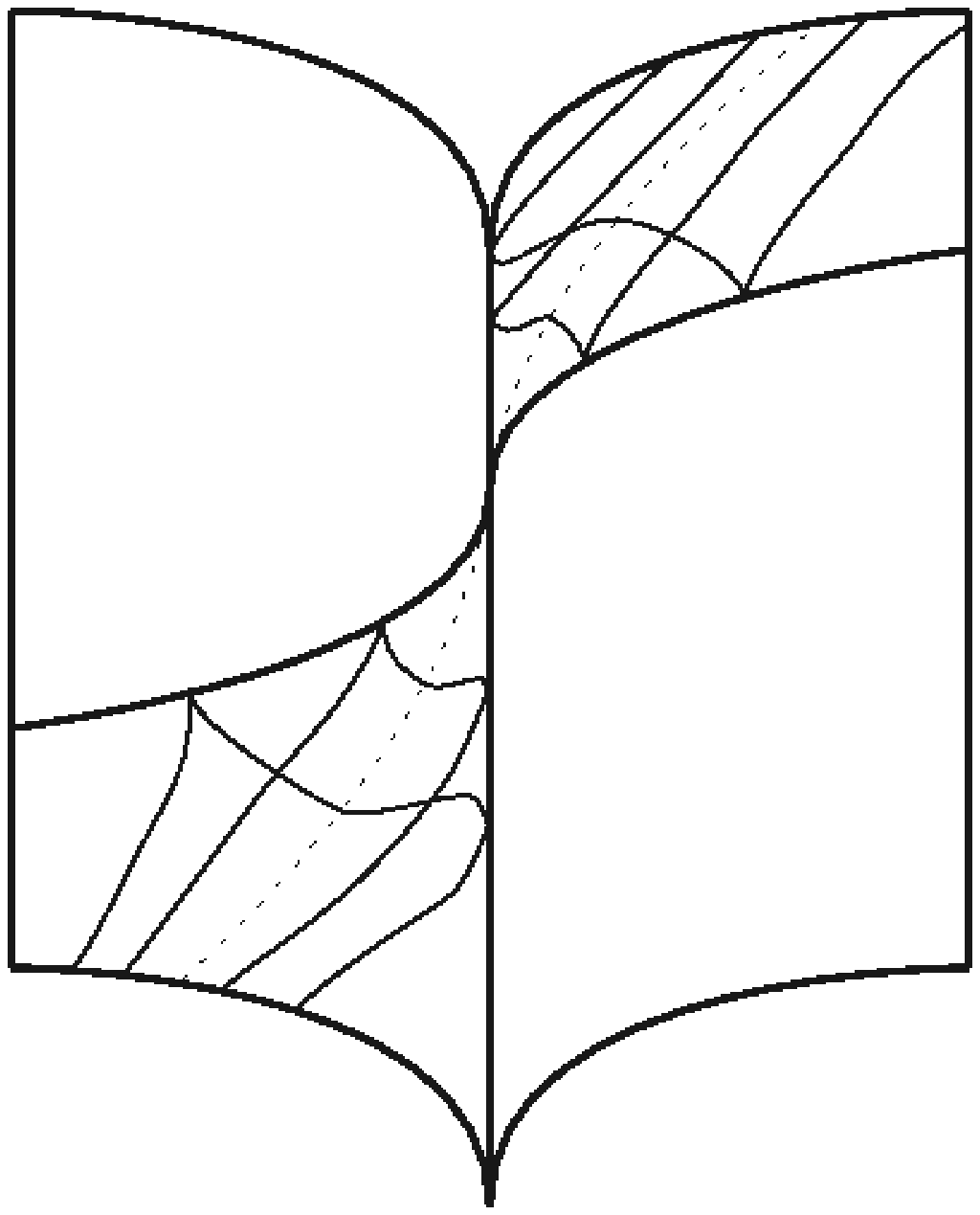}
\hspace{1mm}
\includegraphics[width=.3\linewidth]{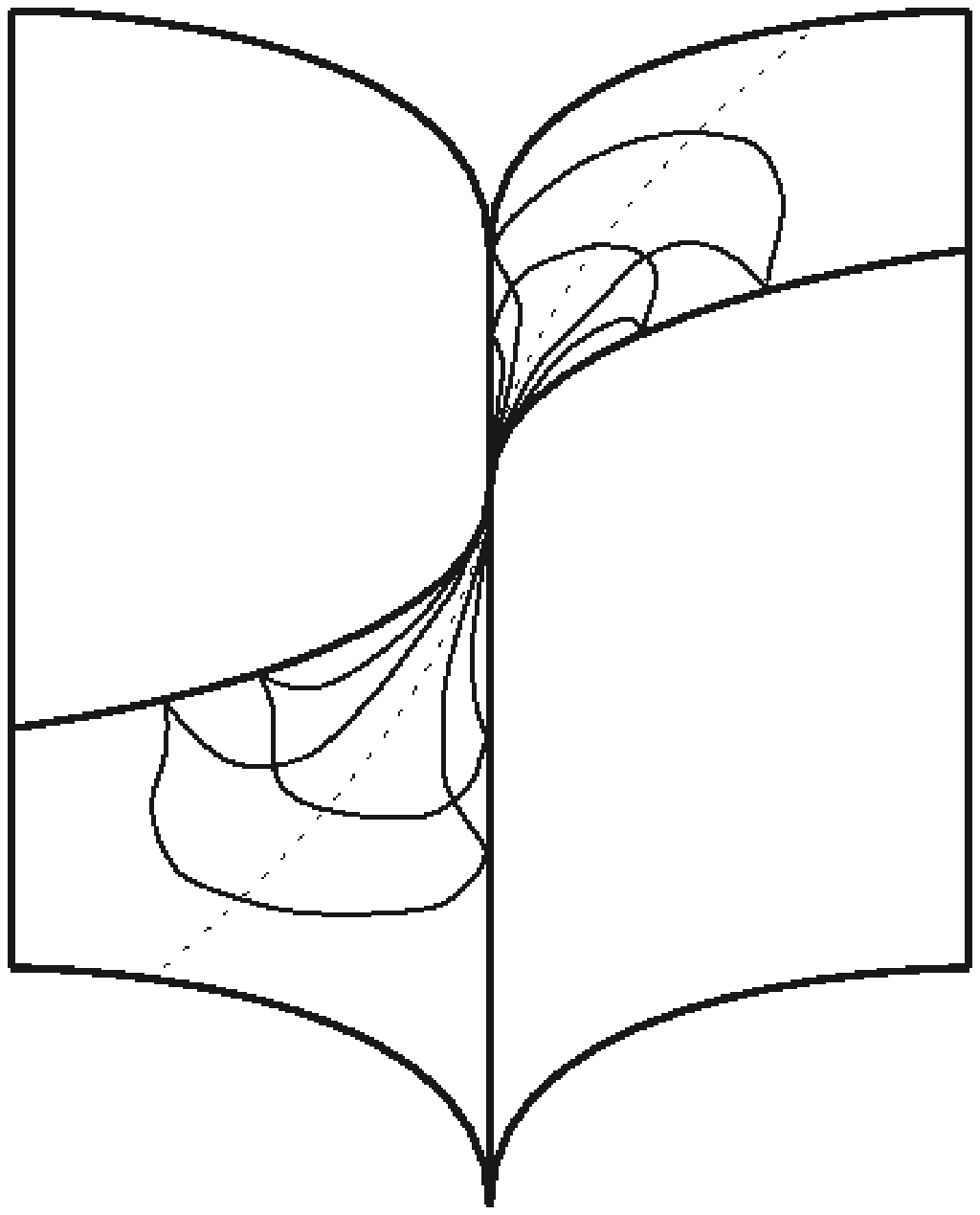}
\caption{Integral curves of
$\omega_{1s}$ and $\omega_{1n}$ on 
images of cuspidal edges.}
\label{fig:ce3}
\end{figure}

\section{Generic foliations}
In 
Propositions \ref{prop:asym} and \ref{prop:char}, 
all the conditions are written in
terms of the $3$-jet of \eqref{eq:west2}.
We can state a genericity result for cuspidal edge.
By \eqref{eq:west2}, we identify the set of jets
of parametrization of cuspidal edges
with $(0,0)$
$$
\begin{array}{l}
\C_k=\Big\{\big(j^ka_1(0),j^kb_2(0),j^kb_3(0),j^kb_4(0,0)\big)
\in J^k(1,1)^3\times J^k(2,1)
\,\Big|\,\\[4mm]
\hspace{0mm}
a_1(0)=a_1'(0)=b_2(0)=b_2'(0)=b_3(0)=0,\ 
b_2'(0)>0,\ b_4(0,0)\ne0\Big\},
\end{array}
$$
for $k\geq3$.
With notation as before, consider
$$(a_{20},a_{30},\ldots,b_{20},b_{30},
\ldots,b_{12},\ldots,b_{03},\ldots)$$
as a coordinate system of $\C_k$ (cf. \eqref{eq:west2}).
Define a subset of $\C_k$ by
$$
\begin{array}{l}
\N
=
\{b_{20}=0,\ b_{30}-a_{20}b_{12}=0\}
\cup
\{b_{20}=0,\ D_{asy}=0\}\\
\hspace{10mm}
\cup
\{b_{20}=0,\ 4b_{12}^3-b_{03}^2b_{30}=0\}
\cup
\{b_{20}=0,\ D_{hmc}=0\}.
\end{array}
$$
Then $\N$ is an algebaric subset of $\C_k$ of
codimension $2$.
Since the singular set of a cuspidal edge 
is a curve, generically it will avoid the set $\N$.
This implies that for generic cuspidal edges
the configuration of $\omega_{lc},\omega_{as},\omega_{ch}$
are those in Propositions
\ref{prop:lc}, 
\ref{prop:asym}, 
\ref{prop:char}.
\appendix
\section{Criteria for $(3,4)$ and $(3,4,5)$-cusp}
\label{sec:cri}
In this section, we state criteria for
$(3,4)$ and $(3,4,5)$-cusp.
Set
$$
c_1(t)=(t^3,t^4,0,\ldots,0),\quad
c_2(t)=(t^3,t^4,t^5,0,\ldots,0),
$$
and a map-germ
$\gamma:(\R,0)\to(\R^n,0)$, where $n\geq2$ (respectively, $n\geq 3$)
is called {\it $(3,4)$-cusp}\/ (respectively, {\it $(3,4,5)$-cusp}\/)
if $\gamma$ is $\A$-equivalent to the map-germ 
$c_1$ (respectively, $c_2$) at $0$.
\begin{proposition}\label{prop:cri}
A map-germ\/
$\gamma:(\R,0)\to(\R^n,0)$, where\/ $n\geq3$ $($respectively, $n\geq 2)$
is\/ $(3,4,5)$-cusp\/ $($respectively, $(3,4)$-cusp\/$)$
if and only if
\begin{itemize}
\item[{\rm (i)}]\label{itm:cusp1} 
$\gamma'(0)=\gamma''(0)=0$,
\item[{\rm (ii)}]\label{itm:cusp2} 
$\gamma^{(3)}(0),\gamma^{(4)}(0)$ and\/ $\gamma^{(5)}(0)$
are linearly independent
$($respectively, linearly dependent, and\/
$\gamma^{(3)}(0)$ and\/ $\gamma^{(4)}(0)$ 
are linearly independent\/$)$
where\/
$(~)^{(i)}=d^i/dt^i$.
\end{itemize}
\end{proposition}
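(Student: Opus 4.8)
The plan is to prove the two implications separately: the conditions (i),(ii) are $\A$-invariant (this gives necessity), and any germ satisfying them can be normalized to $c_1$ or $c_2$ by explicit coordinate changes, the passage to a genuine $\A$-equivalence being supplied by finite determinacy.

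\emph{Necessity.} I would first check that (i),(ii) are preserved by $\A$-equivalence. If $\gamma_2=T\circ\gamma_1\circ S$ with $S,T$ diffeomorphism germs fixing the origins, set $\beta=\gamma_1\circ S$. Using $\gamma_1'(0)=\gamma_1''(0)=0$, the Fa\`a di Bruno formula yields $\beta'(0)=\beta''(0)=0$, $\beta^{(3)}(0)=S'(0)^3\gamma_1^{(3)}(0)$, $\beta^{(4)}(0)=S'(0)^4\gamma_1^{(4)}(0)+\lambda\gamma_1^{(3)}(0)$, and $\beta^{(5)}(0)=S'(0)^5\gamma_1^{(5)}(0)+\mu\gamma_1^{(4)}(0)+\nu\gamma_1^{(3)}(0)$ for some scalars $\lambda,\mu,\nu$; no higher derivative enters because $5$ is not a sum of two integers each $\ge3$. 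Composing with $T$ and using that $\beta$ has vanishing $1$- and $2$-jet gives $\gamma_2^{(k)}(0)=dT_0\,\beta^{(k)}(0)$ for $k=3,4,5$. Hence the triple $\big(\gamma^{(3)}(0),\gamma^{(4)}(0),\gamma^{(5)}(0)\big)$ is transformed by an invertible linear map of the form $dT_0\circ U$, where $U$ is upper-triangular for the ordering $3<4<5$ with nonzero diagonal; such a map preserves both ``the three vectors are independent'' and ``they are dependent while the first two are independent''. Since $c_2^{(3)}(0),c_2^{(4)}(0),c_2^{(5)}(0)=6e_1,24e_2,120e_3$ are independent and $c_1^{(5)}(0)=0$ while $c_1^{(3)}(0),c_1^{(4)}(0)=6e_1,24e_2$, the germs $c_1,c_2$ satisfy (i),(ii), and therefore so does anything $\A$-equivalent to them.

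\emph{Sufficiency.} Assume (i),(ii) and set $v_k=\gamma^{(k)}(0)$. I would apply the linear isomorphism of $(\R^n,0)$ sending $v_3,v_4,v_5$ to $6e_1,24e_2,120e_3$ in the $(3,4,5)$-case (possible precisely because they are independent), and $v_3,v_4$ to $6e_1,24e_2$ in the $(3,4)$-case. In the first case this already gives $j^5\gamma=(t^3,t^4,t^5,0,\dots,0)=j^5c_2$, since the normalization forces the remaining order-$\le5$ coefficients to vanish. In the $(3,4)$-case one is left with
\[
\gamma(t)=\big(t^3+At^5+O(t^6),\ t^4+Bt^5+O(t^6),\ O(t^6),\dots\big),
\]
where the hypothesis $v_5\in\left\langle v_3,v_4\right\rangle$ is exactly what forces the $t^5$-coefficient to have no $e_3,\dots,e_n$ component. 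To clear the two $t^5$-terms I would combine a source reparametrization $t=s+ps^2+qs^3+\cdots$ with the linear target shear $x_1\mapsto x_1-3p\,x_2$: taking $p=-B/4$ removes the $s^5$-term of the second component, the shear cancels the $s^4$-term it creates in the first, and $q$ then removes the $s^5$-term of the first, giving $j^5\gamma=(t^3,t^4,0,\dots,0)=j^5c_1$. Thus in both cases $\gamma$ can be brought to agree with $c_i$ up to order $5$.

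\emph{From $5$-jets to $\A$-equivalence.} The crux is that $c_1$ and $c_2$ are $5$-$\A$-determined, which I would establish through Mather's finite determinacy theorem. Let $\theta(\cdot)$ denote the module of vector fields along a map and $T\A(\cdot)$ the tangent space to its $\A$-orbit. Given $g$ with $j^5g=j^5c_i$, consider the homotopy $F_\sigma=c_i+\sigma(g-c_i)$, $\sigma\in[0,1]$; then $g-c_i\in\mathfrak m_1^{6}\theta(F_\sigma)$, and every $F_\sigma$ has the same leading part as $c_i$. The exponents of that leading part generate the numerical semigroup $\langle3,4,5\rangle$ (all integers $\ge3$) for $c_2$ and $\langle3,4\rangle$ (all integers $\ge3$ except $5$) for $c_1$; in either case every integer $\ge6$ lies in the semigroup, so each monomial $t^m e_j$ with $m\ge6$ is produced along $F_\sigma$ by a target field $x_1^{a}x_2^{b}x_3^{c}\partial_{x_j}$, and a graded argument gives $T\A F_\sigma\supseteq\mathfrak m_1^{6}\theta(F_\sigma)\ni\partial F_\sigma/\partial\sigma$ for all $\sigma$. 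Integrating the corresponding family of source and target vector fields (Mather's trick) trivializes the homotopy, so $g\sim_{\A}c_i$; combined with the reduction above this proves the sufficiency. The hard part is precisely this uniform inclusion along the whole homotopy: it forces the jets to be matched up to order $5$, the last gap of $\langle3,4\rangle$. A germ agreeing with $c_i$ only to order $3$ or $4$ may have leading terms generating a smaller semigroup (for instance $(t^3,0,\dots)$, with semigroup $\langle3\rangle$), for which $T\A F_\sigma\supseteq\mathfrak m_1^6\theta$ fails and the conclusion is false; this is also why (ii) is imposed on all of $\gamma^{(3)},\gamma^{(4)},\gamma^{(5)}$, the $(3,4)$/$(3,4,5)$ dichotomy being decided by whether $v_5$ points in a genuinely new direction.
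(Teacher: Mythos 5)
Your proposal is correct, and its core is the same as the paper's sketch: you establish $\A$-invariance of (i),(ii) via the chain rule (the paper writes only the target part $\Phi(\gamma(t))^{(i)}(0)=d\Phi_0(\gamma^{(i)}(0))$, $i=3,4,5$, and asserts parameter-invariance; your upper-triangular Fa\`a di Bruno computation is the missing detail), and you then normalize the $5$-jet by a linear target map followed by the reparametrization $t\mapsto t+pt^2+qt^3$ with $p=-a_2/4$, $q=-(16a_1+3a_2^2)/48$ together with the shear $x_1\mapsto x_1-3px_2$ --- these are exactly the paper's coefficients, and the shear is the step the paper leaves implicit when it passes from $(t^3+a_1t^5,t^4+a_2t^5,a_3t^5,0,\ldots,0)+O(6)$ to $(t^3,t^4,a_3t^5,0,\ldots,0)+O(6)$. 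Where you genuinely go beyond the paper is the last step: the paper treats the $O(6)$ tail by appealing to the known classification of simple curve singularities \cite{bg,gh}, i.e.\ it never proves that $c_1$ and $c_2$ are $5$-$\A$-determined, whereas you prove it by Mather's homotopy method, using that the semigroups $\langle 3,4\rangle$ and $\langle 3,4,5\rangle$ contain every integer $\geq 6$ so that $\mathfrak{m}_1^6\theta(F_\sigma)\subseteq T\A F_\sigma$ uniformly along the homotopy. This buys a self-contained proof at the cost of invoking the infinitesimal machinery (and your ``graded argument'' still compresses the usual preparation-theorem/Nakayama step); the paper's route is shorter but is honestly only a sketch resting on the cited references.
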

Although this proposition is known \cite{bg,gh},
we give a sketch of proof for the readers who are not
familiar with it.
\begin{proof}
Since
$$
\Phi(\gamma(t))^{(i)}=d\Phi_0(\gamma^{(i)}(0))
\quad (i=3,4,5)
$$
holds for a map $\Phi:(\R^n,0)\to(\R^n,0)$
under the assumption $\gamma'(0)=\gamma''(0)=0$,
it is obvious that the conditions do not depend on
the parameter and the coordinate system on $\R^n$.

To show the 
proposition,
it is enough to show that
$(t^3+a_1t^5,t^4+a_2t^5,a_3t^5,0,\ldots,0)+O(6)$
is $\A$-equivalent to
$(t^3,t^4,a_3t^5,0,\ldots,0)+O(6)$,
where $a_1,a_2,a_3\in\R$.
Considering the parameter change
$
t\mapsto
t-a_2t^2/4-(16 a_1 + 3 a_2^2)t^3/48,
$
it can be proved.
\end{proof}
Using Proposition \ref{prop:cri}, we have the following:
\begin{proposition}
Let\/ $f:(\R^2,0)\to(\R^3,0)$ be a cuspidal edge and\/
$\gamma:(\R,0)\to(\R^2,0)$ an ordinary cusp such that\/
$df_0(\gamma''(0))=0$.
Then\/ $\hat\gamma=f\circ \gamma$ is a\/ $(3,4)$-cusp
at\/ $0$.
\end{proposition}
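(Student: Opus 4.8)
The plan is to verify the two conditions of Proposition~\ref{prop:cri} for $\hat\gamma=f\circ\gamma$ by a direct Taylor expansion, with $f$ put in the normal form~\eqref{eq:west1}. The $(3,4)$-cusp type is preserved under a diffeomorphism of the target (as observed in the proof of Proposition~\ref{prop:cri}), and the two hypotheses on $\gamma$ transform consistently under a source change of coordinates for $f$ together with a reparametrisation of $\gamma$; hence no generality is lost in assuming $f$ is as in~\eqref{eq:west1}.

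First I would extract the needed data of $f$ at the origin. From~\eqref{eq:west1} one gets $f_u(0,0)=(1,0,0)$ and $f_v(0,0)=(0,0,0)$, so that $\Ker df_0=\spann{\partial_v}$ and the null direction is $\partial_v$. Writing $\gamma(t)=(u(t),v(t))$, the hypothesis that $\gamma$ is an ordinary cusp gives $\gamma'(0)=0$ with $\gamma''(0),\gamma'''(0)$ linearly independent, while $df_0(\gamma''(0))=0$ forces $\gamma''(0)\in\Ker df_0$. Together these yield $u'(0)=u''(0)=0$ and $u'''(0)\ne0$, and $v'(0)=0$ with $v''(0)\ne0$, i.e.
$$
u(t)=\alpha\,t^3+O(t^4),\qquad v(t)=\beta\,t^2+O(t^3),\qquad \alpha,\beta\ne0.
$$

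Next I would substitute these into~\eqref{eq:west1} and read off the leading order of each coordinate of $\hat\gamma$. The first coordinate is $u(t)=\alpha t^3+O(t^4)$. In the second, $a_1(u(t))=O(t^6)$ whereas $v(t)^2/2=\tfrac{\beta^2}{2}t^4+O(t^5)$, so it equals $\tfrac{\beta^2}{2}t^4+O(t^5)$. In the third, $b_2(u(t))=O(t^6)$, $v(t)^2b_3(u(t))=O(t^7)$ and $v(t)^3b_4(u(t),v(t))=O(t^6)$, so it is $O(t^6)$. Thus
$$
\hat\gamma(t)=\bigl(\alpha t^3+O(t^4),\ \tfrac{\beta^2}{2}t^4+O(t^5),\ O(t^6)\bigr),
$$
which gives $\hat\gamma'(0)=\hat\gamma''(0)=0$, establishing condition~(i).

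Finally I would check condition~(ii). From the expansion, $\hat\gamma^{(3)}(0)$ is a nonzero multiple of $(1,0,0)$ and $\hat\gamma^{(4)}(0)$ is of the form $(\ast,12\beta^2,0)$; the minor in their first two coordinates equals $72\,\alpha\beta^2\ne0$, so $\hat\gamma^{(3)}(0)$ and $\hat\gamma^{(4)}(0)$ are linearly independent. Because the third coordinate of $\hat\gamma$ is $O(t^6)$, the three vectors $\hat\gamma^{(3)}(0),\hat\gamma^{(4)}(0),\hat\gamma^{(5)}(0)$ all have vanishing last coordinate and are therefore linearly dependent. Proposition~\ref{prop:cri} then identifies $\hat\gamma$ as a $(3,4)$-cusp. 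The one point needing care is the order bookkeeping in the third coordinate: I expect the main (if mild) obstacle to be confirming that it genuinely begins at order $t^6$, which holds because every contribution carries a factor $u^2$, $uv^2$, or $v^3$, each $O(t^6)$, so no cancellation can lower the order below what is needed.
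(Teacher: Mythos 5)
Your proof is correct and takes essentially the same route as the paper: reduce $f$ to its normal form, Taylor-expand $\hat\gamma=f\circ\gamma$ to see it has the form $(\alpha t^3+\cdots,\ \tfrac{\beta^2}{2}t^4+\cdots,\ O(t^6))$, and apply Proposition~\ref{prop:cri}. The only cosmetic difference is that the paper first reparametrizes $\gamma$ to $(t^3+a_4t^4+a_5t^5+O(6),\,t^2)$ and uses \eqref{eq:west2}, while you keep the leading coefficients $\alpha,\beta$ general and work with \eqref{eq:west1}; the computation is the same.
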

\begin{proof}
Without loss of generality, one can assume that
$f$ is given by the form \eqref{eq:west2},
and $\gamma(t)=(t^3+a_4t^4+a_5t^5+O(6),t^2)$ $(a_4,a_5\in\R)$, 
because $df_0(\partial v)=0$.
Then $\hat\gamma(t)=(t^3+a_4t^4+a_5t^5,t^4/2,0)+O(6)$.
By Proposition \ref{prop:cri}, we have the conclusion.
\end{proof}

%%%%%%%%%%%%%%%%%%%%%%%%%%%%%%%%%%%%%%%%%%%%%%%%%%%%%%%%%%%%%%
%\section{thebibliography}

\toukouchange{
\medskip
{\small
\begin{flushright}
\begin{tabular}{l}
Department of Mathematics,\\
Graduate School of Science, \\
Kobe University, \\
Rokko, Nada, Kobe 657-8501, Japan\\
  E-mail: {\tt sajiO\!\!\!amath.kobe-u.ac.jp}
\end{tabular}
%\end{tabular}
\end{flushright}
}}{}
\end{document}